\documentclass[12pt]{amsart}
\usepackage{amsmath, amssymb}
\usepackage{hyperref}
\usepackage{bbm}
\DeclareMathOperator{\supp}{supp}

\newtheorem{theorem}{Theorem}[section]
\newtheorem{lemma}[theorem]{Lemma}
\newtheorem{corollary}[theorem]{Corollary}

\theoremstyle{definition}
\newtheorem{definition}[theorem]{Definition}

\theoremstyle{remark}
\newtheorem{remark}[theorem]{Remark}

\numberwithin{equation}{section}

\begin{document}

% \title[short text for running head]{full title}
\title[Packing dimension of H\"{o}lder images]{A threshold for full packing dimension of H\"{o}lder images of sets and measures}

%    Only \author and \address are required; other information is
%    optional.  Remove any unused author tags.

%    author one information
% \author[short version for running head]{name for top of paper}
\author{Nicolas Angelini}

\curraddr{}
\address{Department of Mathematics, Università degli Studi di Trento. Trento, Italy.}
\address{Instituto de Matemática Aplicada San Luis (IMASL), CONICET, Argentina.  
Departamento de Matemática, FCFMyN, Universidad Nacional de San Luis, Argentina.}
\email{nicolas.angelini.2015@gmail.com}
\thanks{}

%    \subjclass is required.
\subjclass[2010]{Primary }

\date{\today}

\begin{abstract}
We study when the image of a measure under a random H\"{o}lder map attains
full packing dimension. Our main tool is a family of packing intermediate
dimension profiles $\dim_{P,\theta}^{s}\mu$, indexed by $\theta\in(0,1]$
and $s>0$, which refine the packing dimension profiles of Falconer and
Howroyd and reduce to
them at $\theta=1$.
For a large family of random $\alpha$-H\"{o}lder maps
$f_\omega:\mathbb{R}^n\to\mathbb{R}^m$, which includes index-$\alpha$
fractional Brownian motion as a particular case, we prove that for every
compactly supported Borel probability measure $\mu$ on $\mathbb{R}^n$,
$$\dim_P \mu_{f_\omega}=m \quad\text{almost surely}
\quad\Longleftrightarrow\quad \alpha m\leq \lim_{\theta\to 0}\dim_{P,\theta}^n \mu.$$
We further study the profiles $\dim_P^s\mu$ themselves, obtaining a
quantitative lower bound and a
Marstrand-type identity for their limiting behavior as $\theta\to 0$.
Finally, we obtain the analogous
characterization for analytic sets: 
$$\dim_P f_\omega(E)=m \quad\text{almost surely}
\quad\Longleftrightarrow\quad \alpha m\leq \lim_{\theta\to 0}\dim_{P,\theta}^n E,$$
where $\dim_{P,\theta}^n E=\sup\{\dim_{P,\theta}^n \mu :\, \mu\in \mathcal{M}_c^+(E) \}$.
\end{abstract}

\subjclass{Primary 28A78, 28A75}
\keywords{Packing dimension, Dimension interpolation, Fractional Brownian Motion}

\maketitle

\section{Introduction}

Packing dimension $\dim_P$ was introduced as a dual concept to Hausdorff
dimension and has since been studied extensively, both for sets and
measures; see for example \cite{falconer2013fractal, mattila1999geometry}.

A fundamental problem in fractal geometry is understanding how dimension
behaves under mappings: given a measure $\mu$ and a map $f$, how does
$\dim \mu_f$ relate to $\dim \mu$, where $\dim$ denotes any notion of
dimension and $\mu_f$ denotes the image of $\mu$ under $f$?
A natural class of examples is provided by $\alpha$-H\"{o}lder maps,
which includes orthogonal projections (the case $\alpha = 1$) and
fractional Brownian motion.

For index-$\alpha$ fractional Brownian motion $X:\mathbb{R}^n\to\mathbb{R}^m$
and a compactly supported finite Borel measure $\mu$ on $\mathbb{R}^n$,
Kahane \cite{kahane1985some} proved the set version of the formula
\begin{equation}\label{eq:kahane}
    \dim_H \mu_X = \min\!\left\{m,\, \frac{\dim_H \mu}{\alpha}\right\}
    \quad \text{a.s.},
\end{equation}
from which the measure version follows by applying it to a Borel set of
full $\mu$-measure, where $\mu_X$ is the image of $\mu$ under $X$; see
\cite{mattila1999geometry} for background on Hausdorff dimension.
Unfortunately, packing dimension does not admit analogous formulas in
general. Xiao \cite[Theorem 3.1]{xiao1997packing} proved that for any
finite Borel measure $\mu$ on $\mathbb{R}^n$,
\begin{equation}\label{hausineq}
  \dim_P \mu_X = \frac{1}{\alpha}\,\dim_P^{\alpha m}\mu
  \quad \text{a.s.},
\end{equation}
where $\dim_P^{s}\mu$ denotes the $s$-dimensional packing dimension
profile introduced by Falconer and Howroyd \cite{falconer1997packing};
see Definition~\ref{def:packing_profile} and Theorem~\ref{ProjPacking}.
This gives a complete formula for the packing dimension of the image,
expressed in terms of a profile that encodes multiscale geometric
information about $\mu$. However, such profiles are difficult to compute
in practice, and a natural question is: under which conditions can one
recover formulas analogous to~\eqref{eq:kahane}?

In \cite{Angelini_Packing} a threshold parameter $D(\mu)$ was
introduced, defined by
\[
  D(\mu) := \lim_{\theta \to 0}\,\dim_{P,\theta}\mu,
\]
where $\{\dim_{P,\theta}\mu\}_{\theta\in(0,1]}$ is a spectrum of
intermediate dimensions indexed by $\theta$, which at $\theta=1$ recovers
the packing dimension of $\mu$; see
Definition~\ref{def:packing_intermediate}. This parameter characterizes
the threshold at which the $m$-dimensional profile attains its maximum:
\[
  \dim_P^m\mu = m \;\;\Longleftrightarrow\;\; m \leq D(\mu).
\]
However, the proof of this characterization relies essentially on the
fact that $\dim_P^m\mu$ equals the almost sure packing dimension of the
image of $\mu$ under orthogonal projections onto $m$-dimensional
subspaces of $\mathbb{R}^n$, and therefore requires the profile exponent
to be a positive integer. As a consequence, the result cannot be directly
applied to the almost sure packing dimension of images under fractional
Brownian motion, where the natural exponent $\alpha m$ is in general not
an integer.

In this article we fill this gap. Our first main result concerns a family
of intermediate packing dimension profiles $\dim_{P,\theta}^s\mu$,
defined for every real exponent $s>0$, which extends the intermediate
dimension of \cite{Angelini_Packing} and interpolates Xiao's profile at
$\theta=1$. We show that, for every $s\in(0,n]$ and $\theta\in(0,1]$,
\begin{equation}\label{eq:packing_inequality}
  (1-\theta)\min\{s,\dim_{P,\theta}\mu\}
  \;\leq\; \dim_P^s\mu \;\leq\; s,
\end{equation}
and we determine the limiting behaviour of the profiles as $\theta\to 0$,
obtaining an analogue of Marstrand's theorem:
\[
  \lim_{\theta\to 0}\,\dim_{P,\theta}^s\mu
  \;=\;
  \min\!\left\{s,\,D(\mu)\right\};
\]
see Theorem~\ref{Lemma:cotainferior}. For the intermediate dimensions of measures interpolating between the
Hausdorff and Minkowski dimensions, introduced in
\cite{ANGELINI2026130039}, no such identity was obtained, the
corresponding estimates there hold only as inequalities
(\cite[Theorem 6.2]{ANGELINI2026130039}), and requires $\dim_A\mu<\infty$,
that is, $\mu$ doubling (\cite[Lemma 4.1.1]{fraser2020assouad}). In the packing setting no such restriction is
needed, and we obtain the identity for every compactly supported measure.

Letting $\theta\to 0$ in
\eqref{eq:packing_inequality} shows in particular that $\dim_P^s\mu = s$
whenever $s\leq D(\mu)$. Moreover, in the extremal case
$D(\mu)=\dim_P\mu$, which holds, for instance, whenever
$\dim_P\mu=\dim_A(\operatorname{supp}\mu)$, we obtain
$\dim_P^s\mu=\min\{s,\dim_P\mu\}$, and Xiao's formula \eqref{hausineq}
reduces to the packing analogue of \eqref{eq:kahane}, namely
$\dim_P\mu_X=\min\{m,\alpha^{-1}\dim_P\mu\}$ almost surely; see
Corollary~\ref{cor:full_dim_fbm}.

Our second main result removes the integrality restriction described
above. We show that $D(\mu)$ characterizes the exact phase transition for
full-dimensional images for a large family of random $\alpha$-H\"{o}lder
maps $f_\omega:\mathbb{R}^n\to\mathbb{R}^m$, which includes fractional
Brownian motion as a particular case:
\[
  \dim_P\mu_{f_\omega} = m \quad\text{a.s.}
  \;\;\Longleftrightarrow\;\;
  \alpha m \leq D(\mu);
\]
see Theorem~\ref{thm:main}. The proof proceeds by computing
$\dim_{P,\theta}\mu_{f_\omega}$ explicitly for this family (see
Lemmas~\ref{Th1} and~\ref{Th2}), extending the capacity-theoretic ideas
of \cite{Burrell} from the intermediate dimension setting of sets to the
packing dimension setting of measures, and passing from the
integer-parameter profiles for orthogonal projections developed in
\cite{Angelini_Packing} to the real-parameter profiles required for
general H\"{o}lder maps. In particular, at $\theta=1$ our computation
recovers Xiao's formula \eqref{hausineq} as a special case.

Finally, we obtain the analogue of our main result for sets. Following
\cite{xiao1997packing}, the packing dimension of the image of an analytic
set can be recovered from the measures it supports, which allows us to
transfer the threshold phenomenon from measures to sets. Defining
\[
  D(E):=\lim_{\theta\to 0}\,\sup\{\dim_{P,\theta}\mu\,:\,
  \mu\in\mathcal{M}_c^+(E)\},
\]
where $\mathcal{M}_c^+(E)$ denotes the family of finite Borel measures
with compact support contained in $E$, we prove that for an analytic set
$E\subset\mathbb{R}^n$
\[
  \dim_P f_\omega (E) = m \quad\text{a.s.}
  \;\;\Longleftrightarrow\;\;
  \alpha m \leq D(E);
\]
for a large family of random $\alpha$-H\"{o}lder
maps $f_\omega:\mathbb{R}^n\to\mathbb{R}^m$, which includes fractional
Brownian motion as a particular case,
see Theorem~\ref{thm:main_sets} and Corollary~\ref{cor:final}.

The paper is organized as follows. In Section~2 we recall the necessary
background on packing dimension, dimension profiles, fractional Brownian
motion, and the intermediate dimensions and threshold parameter of
\cite{Angelini_Packing}. In Section~3 we introduce the intermediate
packing dimension profiles $\dim_{P,\theta}^{s}\mu$ and state our two
main theorems, the estimates for these profiles
(Theorem~\ref{Lemma:cotainferior}) and the threshold characterization of
full-dimensional images under random H\"{o}lder maps
(Theorem~\ref{thm:main}); we also establish there the deterministic and
probabilistic bounds on image measures (Lemmas~\ref{Th1} and~\ref{Th2})
and derive the intermediate profiles of images under fractional Brownian
motion (Corollary~\ref{cor:FBM}). Section~\ref{sec:proofs} contains the
proofs of both main theorems, together with the extremal case in which
the packing analogue of Kahane's formula is recovered
(Corollaries~\ref{cor:full_dim_profile} and~\ref{cor:full_dim_fbm}).
Finally, in Section~5 we transfer the threshold to analytic sets
(Theorem~\ref{thm:main_sets}) and deduce the corresponding statement for
images under random H\"{o}lder maps (Corollary~\ref{cor:final}).

\section{Preliminaries}\label{sec:prelim}

We write $B(x,r)$ for the closed ball of centre $x$ and radius $r$, and
for a bounded set $E\subset\mathbb{R}^n$ we write $|E|$ for its diameter.
Throughout the paper, $\mu$ denotes a Borel probability measure with
compact support on $\mathbb{R}^n$, that is $\mu(\mathbb{R}^n)=1$, and we
write $\supp\mu$ for its support. Given a Borel measurable function
$f:\mathbb{R}^n\to\mathbb{R}^d$, the image (or pushforward) measure of
$\mu$ under $f$ is
$$\mu_f(B):=\mu\bigl(f^{-1}(B)\bigr)=\mu\{x\in\mathbb{R}^n:f(x)\in B\},
\qquad B\subset\mathbb{R}^d \text{ Borel}.$$
Given $0<\alpha\leq 1$, a function $f:A\subseteq\mathbb{R}^n\to\mathbb{R}^m$
is \emph{$\alpha$-H\"{o}lder continuous} if there exists a constant $c>0$
such that
$$|f(x)-f(y)|\leq c\,|x-y|^{\alpha}\qquad\text{for all }x,y\in A.$$
For a nonempty set $E\subset\mathbb{R}^n$, the \emph{Assouad dimension}
$\dim_A E$ is the infimum of the exponents $s\geq 0$ for which there
exists $C>0$ such that, for all $0<r<R$ and all $x\in E$, the set
$E\cap B(x,R)$ can be covered by at most $C(R/r)^{s}$ balls of radius
$r$; see \cite{fraser2020assouad} for its properties. When
$E=\supp\mu$ is the compact support of a Borel probability measure, one
has
$$\dim_P\mu\leq\dim_P E\leq\dim_A E\leq n.$$

\subsection*{Packing dimension of measures and dimension profiles}

The packing dimension of a Borel measure $\mu$ is defined by
\[
\dim_P\mu=\inf\{\dim_P E : E \text{ is a Borel set with }\mu(E)>0\},
\]
where $\dim_P E$ denotes the packing dimension of the set $E$. By
\cite[Lemma 4.1]{hu1994fractal}, it admits the equivalent
characterization
\begin{equation}\label{eq:hu}
\dim_P\mu=\sup\left\{t\geq 0 : \liminf_{r\to 0}\frac{\mu(B(x,r))}{r^{t}}=0
\ \text{ for }\mu\text{-a.e.\ }x\in\mathbb{R}^n\right\}.
\end{equation}

Falconer and Howroyd \cite{falconer1997packing} introduced the packing
dimension profiles, which recover the packing dimension of the typical
projection of a measure onto a lower-dimensional subspace.

\begin{definition}\label{def:packing_profile}
For a finite Borel measure $\mu$ on $\mathbb{R}^n$ and $0\leq m\leq n$,
set
$$F_m^\mu(x,r)=\int_{\mathbb{R}^n}\min\{1,\,r^m|x-y|^{-m}\}\,d\mu(y),$$
and define the \emph{$m$-dimensional packing dimension profile} of $\mu$
by
$$\dim_P^m\mu=\sup\left\{t\geq 0 : \liminf_{r\to 0}\frac{F_m^{\mu}(x,r)}{r^{t}}=0
\ \text{ for }\mu\text{-a.e.\ }x\in\mathbb{R}^n\right\}.$$
\end{definition}

At the ambient exponent this reduces to the packing dimension: by
\cite[Corollary 3]{falconer1997packing},
\begin{equation}\label{Corollary3}
\dim_P^n\mu=\dim_P\mu.
\end{equation}
Xiao \cite{xiao1997packing} showed that these profiles, at the possibly
non-integer exponent $\alpha m$, give the almost sure packing dimension
of the image of a measure under fractional Brownian motion.

\begin{theorem}[{\cite[Theorem 3.1]{xiao1997packing}}]\label{ProjPacking}
Let $\mu$ be a finite Borel measure on $\mathbb{R}^n$ and let
$X:\mathbb{R}^n\to\mathbb{R}^m$ be an index-$\alpha$ fractional Brownian
motion. Then, with probability $1$,
$$\dim_P\mu_X=\frac{1}{\alpha}\,\dim_P^{\alpha m}\mu.$$
\end{theorem}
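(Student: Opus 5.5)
The plan is the standard two-part argument: an upper bound valid for every realization, and a matching almost sure lower bound obtained from a capacity/energy-type estimate at small scales. Set $s=\alpha m$ and write $t^*=\dim_P^{s}\mu$; the goal is $\dim_P\mu_X=t^*/\alpha$ almost surely.

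\textbf{Upper bound.} By \eqref{eq:hu} applied to $\mu_X$ on $\mathbb{R}^m$, it suffices to show: for $t>t^*$, and for $\mu$-a.e.\ $x$, almost surely
\[
\liminf_{r\to0}\frac{\mu_X(B(X(x),r^\alpha))}{r^{t}}>0 .
\]
Two comments on this reduction. First, the transfer from $\mu$-a.e.\ $x$ to $\mu_X$-a.e.\ image point is via Fubini, since $\mu_X$-a.e.\ point is of the form $X(x)$ with $x$ $\mu$-typical. Second, this gives packing dimension at most $t/\alpha$; indeed, by definition of $t^*$, for $\mu$-positive many $x$ we have $\liminf F^\mu_s(x,r)/r^t>0$, i.e.\ $F_s^\mu(x,r)\ge c r^t$ for all small $r$.

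The key computation is
\[
\mathbb{E}\,\mu_X(B(X(x),r^\alpha))=\int \mathbb{P}\big(|X(x)-X(y)|\le r^\alpha\big)\,d\mu(y)\asymp \int\min\{1,(r/|x-y|)^{\alpha m}\}\,d\mu(y)=F^\mu_s(x,r).
\]
Here I use that $X(x)-X(y)$ is Gaussian in $\mathbb{R}^m$ with variance $|x-y|^{2\alpha}$, so the small-ball probability is comparable to $\min\{1,(r^\alpha/|x-y|^\alpha)^m\}$.

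Expectation alone gives only the other direction, so for this bound I instead pass through the standard comparison: if $\mu_X$ has packing dimension larger than $t/\alpha$, then $\mu_X$-typically $\mu_X(B(X(x),\rho))\le\rho^{t/\alpha}$ along a sequence $\rho_k\to0$. Integrating $\mathbb{E}\int\mu_X(B(X(x),\rho))\,d\mu$ and comparing with $F_s^\mu$ then forces $\liminf F^\mu_s(x,r)/r^{t'}=0$ for $t'<t$ slightly smaller. The catch is that this requires a lower-tail estimate, not just a mean.

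\textbf{Lower bound.} For $t<t^*$ and $\mu$-a.e.\ $x$, pick $r_k\to0$ with $F^\mu_s(x,r_k)\le r_k^{t}$; I may take $r_k=2^{-k}$-type dyadic scales along a subsequence. Then
\[
\mathbb{E}\int\mu_X(B(X(x),r_k^\alpha))\,d\mu(x)\lesssim\int F^\mu_s(x,r_k)\,d\mu .
\]
Applying Markov with a Borel--Cantelli sum over a sparse subsequence gives, almost surely and for $\mu$-a.e.\ $x$,
\[
\liminf_\rho\frac{\mu_X(B(X(x),\rho))}{\rho^{(t-\varepsilon)/\alpha}}=0,
\]
hence $\dim_P\mu_X\ge t/\alpha$ by \eqref{eq:hu}.

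One care point here: the scales $r_k$ depend on $x$. I handle this by working with the sets $\{x: F^\mu_s(x,2^{-k})\le 2^{-kt}\}$ and using that $F$ changes by bounded factors between consecutive dyadic scales. This also requires a Fubini argument, since the exceptional $\omega$-set is taken jointly in $(x,\omega)$.

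\textbf{Main obstacle.} The hardest step is the upper bound. There one needs that $\mu_X(B(X(x),r^\alpha))$ is not much smaller than its mean $F_s^\mu(x,r)$ simultaneously for all small $r$, almost surely. My first attempt would be the local nondeterminism of fractional Brownian motion together with a chaining/Hölder argument. This should give $\mu_X(B(X(x),Cr^\alpha\log(1/r)^{1/2}))\ge\mu(B(x,r))$, but that controls only the mass near $x$ and not the tail of $F$.

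Instead I would follow Falconer--Howroyd/Xiao and bound the packing dimension via packing premeasures and coverings. By definition of $t^*$ there is a set $A$ of positive $\mu$ measure with $F^\mu_s(x,r)\ge cr^t$ for $x\in A$ and $r<r_0$. Take a maximal $r$-separated family of image balls; then bound, by a second moment $\mathbb{E}\,\mu_X(B)^2$ computed via the Gaussian density, the probability that many of these balls have small mass. Summing over dyadic $r$ with Borel--Cantelli then yields $\dim_P X(A')\le t/\alpha$ for a set $A'$ of positive measure, which suffices for the upper bound on $\dim_P\mu_X$ (the infimum over sets of positive measure).
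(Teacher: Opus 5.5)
\textbf{Gap: the upper bound.} The genuine gap is the upper bound $\dim_P\mu_X\le\alpha^{-1}\dim_P^{\alpha m}\mu$. You do not actually prove it. The route you sketch, lower-tail control of $\mu_X(B(X(x),r^\alpha))$ against its mean $F^\mu_{\alpha m}(x,r)$ via local nondeterminism or a second moment plus Borel--Cantelli, is both unsubstantiated and unnecessary:
\begin{itemize}
\item As described, a per-ball second moment gives through Paley--Zygmund only that a ball is heavy with probability bounded below. That does not give summable failure probabilities, so Borel--Cantelli does not apply.
\item Your contrapositive would need a single realization, along $\omega$-dependent scales $\rho_k$, to control a mean. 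That is exactly the lower-tail estimate you do not have.
\end{itemize}

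\textbf{The missing idea.} This direction is \emph{deterministic} once you measure $\mu_X$ through the $m$-dimensional profile kernel of the target rather than through ball masses, using $\dim_P\mu_X=\dim_P^m\mu_X$ from \eqref{Corollary3}. Work on compact $\operatorname{supp}\mu$, which is the setting where the paper recovers this theorem as the case $\theta=1$ of Corollary~\ref{cor:FBM}. Almost surely $|X(x)-X(y)|\le M|x-y|^{\alpha-\varepsilon}$ there, so pointwise
\[
\min\{1,\,r^{m}|x-y|^{-m(\alpha-\varepsilon)}\}\le\max\{1,M^{m}\}\,\min\{1,\,r^{m}|X(x)-X(y)|^{-m}\}.
\]
Integrating in $d\mu(y)$ gives $F^\mu_{m(\alpha-\varepsilon)}(x,r^{1/(\alpha-\varepsilon)})\le C\,F^{\mu_X}_m(X(x),r)$.

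Suppose $\liminf_{\rho\to0}\rho^{-t}F^\mu_{m(\alpha-\varepsilon)}(x,\rho)>0$ on a set $A$ with $\mu(A)>0$. Let $Y$ be the set of $y$ with $\liminf_{r\to0}r^{-t/(\alpha-\varepsilon)}F^{\mu_X}_m(y,r)>0$. Then $X^{-1}(Y)\supseteq A$, so $\mu_X(Y)>0$ and $\dim_P\mu_X=\dim_P^m\mu_X\le t/(\alpha-\varepsilon)$. Hence
\[
\dim_P\mu_X\le\frac{\dim_P^{m(\alpha-\varepsilon)}\mu}{\alpha-\varepsilon}\le\frac{\dim_P^{m\alpha}\mu}{\alpha-\varepsilon},
\]
the second step by monotonicity of the profile in its exponent (Remark~\ref{thm:lipschitz_profile}), and letting $\varepsilon\to0$ finishes. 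This is Lemma~\ref{Th1} at $\theta=1$. The tail of $F^\mu_{\alpha m}$ that worried you is absorbed by the tail of the target kernel, which is precisely what the Falconer--Howroyd identity handles.

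A minor point: in your reduction, for $t>t^*$ you have, and need, only a set of positive $\mu$-measure, not $\mu$-a.e.\ $x$.

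\textbf{Lower bound.} Your lower bound is correct and is essentially the paper's Lemma~\ref{Th2} at $\theta=1$: the first-moment estimate $\mathbb{E}\,\mu_X(B(X(x),r^\alpha))\lesssim F^\mu_{\alpha m}(x,r)$ followed by Fubini. The paper handles the $x$-dependence of the scales more economically. It fixes $x$ and applies Fatou along that point's own sequence, getting $\mathbb{E}\liminf_{r\to0}r^{-t}\mu_X(B(X(x),r^\alpha))\lesssim\epsilon$ for every $\epsilon>0$, and only then applies Fubini in $(x,\omega)$.

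Your dyadic version also works, provided you sum over all $k$ rather than a sparse subsequence. For $t''<t$, the quantity $\sum_k\int\mathbbm{1}_{A_k}(x)\,2^{kt''}\mu_X(B(X(x),2^{-k\alpha}))\,d\mu(x)$ has finite expectation. So almost surely each $\mu$-typical $x$ satisfies $2^{kt''}\mu_X(B(X(x),2^{-k\alpha}))\to0$ along the infinitely many $k$ with $x\in A_k$.
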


For an analytic set $E\subset\mathbb{R}^n$, let $\mathcal{M}_c^+(E)$
denote the family of finite Borel measures with compact support
contained in $E$. It is well known (see \cite{hu1994fractal}, or
\cite[(4.1)]{falconer1997packing}) that
\begin{equation}
\dim_P E=\sup\{\dim_P\mu : \mu\in\mathcal{M}_c^+(E)\},
\end{equation}
which led Falconer and Howroyd to define the $m$-dimensional packing
dimension profile of $E$ by
$\dim_P^m E:=\sup\{\dim_P^m\mu : \mu\in\mathcal{M}_c^+(E)\}$, see
\cite[(5.3)]{falconer1997packing}.

\subsection*{Fractional Brownian motion}

Given $0<\alpha<1$ and $m,n\in\mathbb{N}$, an \emph{index-$\alpha$
fractional Brownian motion} is a random field
$X:\mathbb{R}^n\to\mathbb{R}^m$ of the form $X=(X_1,\ldots,X_m)$, where
$X_1,\ldots,X_m$ are independent real-valued random fields
$X_i:\mathbb{R}^n\to\mathbb{R}$ satisfying:
\begin{enumerate}
    \item[i)] $X_i(0)=0$;
    \item[ii)] $X_i$ is continuous with probability $1$;
    \item[iii)] the increments $X_i(x)-X_i(y)$ are normally distributed
    with mean $0$ and variance $|x-y|^{2\alpha}$, for all
    $x,y\in\mathbb{R}^n$.
\end{enumerate}
It is well known that, almost surely, the sample paths of $X$ are
$(\alpha-\varepsilon)$-H\"{o}lder continuous on compact sets for every
$\varepsilon>0$. We refer the reader to the classical text of Kahane
\cite{kahane1985some} for a detailed account of fractional Brownian
motion and related stochastic processes.

\subsection*{Intermediate dimension and threshold parameter}

Recently, in \cite{Angelini_Packing}, the packing intermediate dimension
$\dim_{P,\theta}\mu$ and the threshold parameter $D(\mu)$ of a Borel
probability measure were introduced.

\begin{definition}\label{def:packing_intermediate}
Let $\mu$ be a Borel probability measure with compact support on
$\mathbb{R}^n$ and let $\theta\in(0,1]$. For $0\leq t\leq s$ set
$$\phi_{r,\theta}^{t,s}(z)=\min\{1,\,r^t|z|^{-t},\,
r^{\theta(s-t)+t}|z|^{-s}\},$$
 
$$F_{t,s,\theta}^{\mu}(x,r)=\int\phi_{r,\theta}^{t,s}(x-y)\,d\mu(y),$$
and define the \emph{$\theta$-packing intermediate dimension} of $\mu$ by
$$\dim_{P,\theta}\mu=\sup\left\{t\in[0,n] : \liminf_{r\to 0}
r^{-t}F_{t,n,\theta}^{\mu}(x,r)=0 \ \text{ for }\mu\text{-a.e.\ }
x\in\mathbb{R}^{n}\right\}.$$
\end{definition}

\begin{definition}\label{maindefinition}
The \emph{threshold parameter} of a Borel probability measure $\mu$ with
compact support on $\mathbb{R}^n$ is
$$D(\mu):=\lim_{\theta\to 0}\dim_{P,\theta}\mu.$$
\end{definition}

We shall repeatedly use the following equivalent description of
$\dim_{P,\theta}\mu$, established in \cite{Angelini_Packing}, in terms of
the measure of balls along windows of scales.

\begin{theorem}[{\cite[Theorem 4.6]{Angelini_Packing}}]\label{newdef}
Let $\mu$ be a Borel probability measure with compact support on
$\mathbb{R}^n$ and let $\theta\in(0,1]$. Then
\begin{equation}\label{eq:newdef-formula}
\dim_{P,\theta}\mu=\sup\left\{
\begin{array}{c}
s\geq 0 \, : \, \forall\, \varepsilon>0 \text{ and } \delta_0>0\
\exists\, \delta(x)<\delta_0 \text{ such that}\\[2mm]
r^{-s}\mu(B(x,r))<\varepsilon\ \ \forall\, r\in[\delta(x)^{1/\theta},\,\delta(x)],
\ \ \text{ for }\mu\text{-a.e.\ }x
\end{array}
\right\}.
\end{equation}
\end{theorem}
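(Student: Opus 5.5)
We argue by proving two inequalities. Write $s^*$ for the supremum on the right-hand side of \eqref{eq:newdef-formula} and $d:=\dim_{P,\theta}\mu$. The first step is to understand the kernel $\phi^{t,n}_{r,\theta}$ with $t\le n$. Comparing the three terms in the minimum, $\phi^{t,n}_{r,\theta}(z)=1$ for $|z|\le r$ and $\phi^{t,n}_{r,\theta}(z)=r^t|z|^{-t}$ for $r\le |z|\le r^\theta$. The switch happens at $r^\theta$ because $r^t|z|^{-t}\le r^{\theta(n-t)+t}|z|^{-n}$ if and only if $|z|\le r^\theta$. Finally, $\phi^{t,n}_{r,\theta}(z)=r^{\theta(n-t)+t}|z|^{-n}$ for $|z|\ge r^\theta$. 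The natural dictionary between the two descriptions is therefore $\delta=r^\theta$, i.e.\ $r=\delta^{1/\theta}$, so that the window $[\delta^{1/\theta},\delta]$ is exactly the range $[r,r^\theta]$ on which the kernel decays like $|z|^{-t}$.

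\emph{The inequality $d\le s^*$.} This direction is easy. Fix $t<d$. For $\mu$-a.e.\ $x$ there are arbitrarily small $r$ with $r^{-t}F^\mu_{t,n,\theta}(x,r)<\varepsilon$. For every $\rho\in[r,r^\theta]$, the kernel is at least $r^t\rho^{-t}$ on $B(x,\rho)$, so
\[
F^\mu_{t,n,\theta}(x,r)\ge r^t\rho^{-t}\mu(B(x,\rho)),
\]
that is, $\rho^{-t}\mu(B(x,\rho))\le r^{-t}F^\mu_{t,n,\theta}(x,r)<\varepsilon$ uniformly on the window. Taking $\delta(x)=r^\theta<\delta_0$ shows that $t$ belongs to the set defining $s^*$, and hence $s^*\ge d$.

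\emph{The inequality $s^*\le d$.} Fix $t<s<s^*$ and a point $x$ with $\rho^{-s}\mu(B(x,\rho))<\varepsilon$ for $\rho\in[\delta^{1/\theta},\delta]$, and put $r=\delta^{1/\theta}$. We split $F^\mu_{t,n,\theta}(x,r)$ into three pieces.
\begin{itemize}
\item The ball $B(x,r)$ contributes at most $\varepsilon r^s$.
\item The annulus $r<|x-y|\le r^\theta$ is handled by summing over dyadic shells $\rho_k=2^kr\le\delta$. The shell at $\rho_k$ contributes at most $r^t\rho_k^{-t}\mu(B(x,\rho_k))\le \varepsilon r^t\rho_k^{s-t}$. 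Since $s>t$ this is a geometric series, dominated by $C\varepsilon r^t\delta^{s-t}$.
\item The tail $|x-y|>\delta$ contributes $r^{\theta(n-t)+t}\int_{|x-y|>\delta}|x-y|^{-n}\,d\mu(y)$.
\end{itemize}
The tail is the main obstacle. Using only $\mu(B(x,\rho))\le 1$ gives the bound $r^t\delta^{-t}$, which is useless, so we must exploit that $x$ is $\mu$-typical.

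We would first try the averaging device behind \eqref{Corollary3} (Falconer--Howroyd). By a Besicovitch covering argument one has, for each fixed $\rho$ and $\rho'\ge\rho$,
\[
\int \frac{\mu(B(x,\rho'))}{\mu(B(x,\rho))}\,d\mu(x)\le C_n(\rho'/\rho)^n .
\]
Summing this over dyadic $\rho'=2^k\rho$ against the weights $\rho^n\rho'^{-n}$ should give
\[
\int \frac{\rho^n\int_{|x-y|>\rho}|x-y|^{-n}\,d\mu(y)}{\mu(B(x,\rho))}\,d\mu(x)\le C\log(1/\rho).
\]
Markov's inequality along dyadic $\rho=2^{-j}$ and Borel--Cantelli then yield, for every $\eta>0$, for $\mu$-a.e.\ $x$ and all sufficiently small dyadic $\rho$,
\[
\int_{|x-y|>\rho}|x-y|^{-n}\,d\mu(y)\le \rho^{-n-\eta}\mu(B(x,\rho)).
\]
Applying this at $\rho\approx\delta$, after rounding $\delta$ to a dyadic scale inside the window at the cost of constants, the tail is bounded by
\[
r^{\theta(n-t)+t}\delta^{-n-\eta}\,\varepsilon\delta^{s}=\varepsilon r^t\delta^{s-t-\eta}.
\]

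Combining the three pieces with $\eta<s-t$ gives $r^{-t}F^\mu_{t,n,\theta}(x,r)\le C\varepsilon$ for $\mu$-a.e.\ $x$ and arbitrarily small $r$. Since $\varepsilon$ is arbitrary, we get $\liminf_{r\to0}r^{-t}F^\mu_{t,n,\theta}(x,r)=0$, hence $t\le d$. Letting $t\uparrow s^*$ gives $s^*\le d$. It remains to handle routine points: the $\mu$-a.e.\ quantifiers, which we deal with by taking countable sequences $\varepsilon_j,\delta_{0,j}\to0$ and intersecting the resulting full-measure sets, and the fact that the $\delta(x)$ produced in the first direction can be chosen below any prescribed $\delta_0$.
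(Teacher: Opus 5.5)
The paper does not prove this statement; it is imported from \cite[Theorem 4.6]{Angelini_Packing}, so there is no in-text argument to compare against, and I have checked your proof on its own terms. It is correct.

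The dictionary $\delta=r^\theta$ is right. So is the bound $\phi^{t,n}_{r,\theta}(x-y)\ge r^t\rho^{-t}$ for $y\in B(x,\rho)$, $\rho\in[r,r^\theta]$, which gives the easy direction. The three-piece splitting in the hard direction is also correct. You correctly locate the only place where $\mu$-typicality of $x$ is needed: the tail $|x-y|>\delta$, where the trivial bound loses exactly a factor $r^{-t\theta}$.

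Your tail argument goes through. The averaged estimate $\int\mu(B(x,\rho'))/\mu(B(x,\rho))\,d\mu(x)\le C_n(\rho'/\rho)^n$ holds; a grid of cubes of diameter $\rho$ already gives it, so Besicovitch is not needed. Compactness of $\supp\mu$ gives the $C\log(1/\rho)$ bound for the weighted dyadic sum. Markov plus Borel--Cantelli along $\rho=2^{-j}$ then yield $\int_{|x-y|>\rho}|x-y|^{-n}\,d\mu(y)\le\rho^{-n-\eta}\mu(B(x,\rho))$ for all small dyadic $\rho$ at $\mu$-typical $x$, and hence the tail bound $C\varepsilon r^t\delta^{s-t-\eta}$.

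This is the Falconer--Howroyd mechanism behind \eqref{Corollary3}. The paper also records from the same source a $\mu$-a.e.\ growth estimate, Lemma~\ref{lem:assouad_measure_growth}, which controls $\mu(B(x,r))/\mu(B(x,\rho))$ at typical points and could play the same role in the tail. Your derivation is self-contained and uses only compact support.

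Two small points to tighten.

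First, $\dim_{P,\theta}\mu$ is a supremum over $t\in[0,n]$, since the kernel $\phi^{t,n}_{r,\theta}$ requires $t\le n$. So your second direction only gives $d\ge\min\{s^*,n\}$, and you also need $s^*\le n$. This is immediate: the window condition at exponent $s$ gives $\liminf_{r\to0}r^{-s}\mu(B(x,r))=0$ for $\mu$-a.e.\ $x$, so $s\le\dim_P\mu\le n$ by \eqref{eq:hu}.

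Second, when $\theta=1$ the window $[\delta^{1/\theta},\delta]$ is the single point $\{\delta\}$, so you cannot round $\delta$ to a dyadic scale inside it. Instead, take a dyadic $\rho\in(\delta/2,\delta]$ and use $\int_{|x-y|>\delta}|x-y|^{-n}\,d\mu(y)\le\int_{|x-y|>\rho}|x-y|^{-n}\,d\mu(y)$ together with $\mu(B(x,\rho))\le\mu(B(x,\delta))$. This costs only a factor $2^{n+\eta}$ and works for every $\theta$.
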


Finally, we record two estimates from \cite{Angelini_Packing} that will
be used in Section~\ref{sec:proofs}.

\begin{lemma}[{\cite[Lemma 3.2]{Angelini_Packing}}]\label{lem:assouad_measure_growth}
Let $\mu$ be a Borel probability measure on $\mathbb{R}^n$ with compact
support $E:=\supp\mu$, and let $a\in(0,1)$. Fix $\varepsilon>0$ and set
$s:=\dim_A E$. Then for $\mu$-almost every $x\in E$ there exists
$\rho_0(x)>0$ such that for all $0<\rho<\rho_0(x)$ and all $r$ with
$\rho^{a}\leq r\leq 1$,
\begin{equation}\label{eq:assouad_growth}
\mu\bigl(B(x,r)\bigr)\leq
\left(\frac{4r}{\rho}\right)^{s(1+\varepsilon)}\mu\bigl(B(x,\rho)\bigr).
\end{equation}
\end{lemma}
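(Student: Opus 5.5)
The plan is a Borel--Cantelli argument over dyadic scales. The Assouad covering bound controls how much mass of a ball $B(x,r)$ can sit near points $x$ whose $\rho$-ball is anomalously light. Fix $\varepsilon>0$ and $a\in(0,1)$. Put $s'=s(1+\varepsilon/2)$, with $s=\dim_A E$, and let $C$ be the Assouad constant for exponent $s'$. (If $s=0$, I would replace $s$ by a small positive number; I expect this to need a separate check, since the statement degenerates there.) For dyadic scales $\rho_k=2^{-k}$ and $r_j=2^{-j}$ with $\rho_k^{a}\le r_j\le 1$, define the bad set
$$A_{k,j}=\Bigl\{x\in E:\ \mu\bigl(B(x,\rho_k)\bigr)< t_{k,j}\,\mu\bigl(B(x,r_j)\bigr)\Bigr\},\qquad t_{k,j}=\Bigl(\tfrac{r_j}{\rho_k}\Bigr)^{-s(1+\varepsilon)}.$$

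\emph{Step 1: bounding $\mu(A_{k,j})$.} Cover $E$ by the cells $Q_i$ of a grid of side comparable to $r_j$; their triples have overlap bounded by a constant depending only on $n$. By the definition of $\dim_A E$, each $Q_i\cap E$ is covered by at most $M\le C'(r_j/\rho_k)^{s'}$ balls $P$ of radius $\rho_k/2$. Suppose such a ball $P$ contains a point $x\in A_{k,j}\cap Q_i$. Then $P\subset B(x,\rho_k)$ and $B(x,r_j)\subset 3Q_i$, so
$$\mu(P)\le \mu\bigl(B(x,\rho_k)\bigr)<t_{k,j}\,\mu(3Q_i).$$
Summing over the balls $P$ gives $\mu(A_{k,j}\cap Q_i)\le M\,t_{k,j}\,\mu(3Q_i)$. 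Summing over $i$ and using the bounded overlap of the $3Q_i$,
$$\mu(A_{k,j})\lesssim \Bigl(\tfrac{r_j}{\rho_k}\Bigr)^{s'-s(1+\varepsilon)}=\Bigl(\tfrac{r_j}{\rho_k}\Bigr)^{-s\varepsilon/2}\le \rho_k^{(1-a)s\varepsilon/2},$$
where the last inequality uses $r_j\ge\rho_k^{a}$.

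\emph{Step 2: Borel--Cantelli.} For fixed $k$ there are at most $k+1$ admissible values of $j$. Hence
$$\sum_k\mu\Bigl(\bigcup_j A_{k,j}\Bigr)\lesssim\sum_k (k+1)\,2^{-k(1-a)s\varepsilon/2}<\infty.$$
By Borel--Cantelli, for $\mu$-a.e.\ $x$ there is $k_0(x)$ such that for all $k\ge k_0(x)$ and all admissible $j$,
$$\mu\bigl(B(x,r_j)\bigr)\le (r_j/\rho_k)^{s(1+\varepsilon)}\,\mu\bigl(B(x,\rho_k)\bigr).$$

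\emph{Step 3: from dyadic to arbitrary scales.} Given $\rho<\rho_0(x)$ and $r\in[\rho^{a},1]$, take $\rho_k\le\rho<2\rho_k$ and the dyadic $r_j$ with $r\le r_j<2r$. Monotonicity of $\rho\mapsto\mu(B(x,\rho))$ then gives
$$\mu\bigl(B(x,r)\bigr)\le\mu\bigl(B(x,r_j)\bigr)\le (r_j/\rho_k)^{s(1+\varepsilon)}\mu\bigl(B(x,\rho_k)\bigr)\le (4r/\rho)^{s(1+\varepsilon)}\mu\bigl(B(x,\rho)\bigr),$$
since $r_j/\rho_k< 4r/\rho$. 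Two edge effects need checking. First, $r_j$ may exceed $1$ when $r$ is close to $1$; to handle this I would allow $j\ge -1$ in the bad sets, which does not affect the counting. Second, $r_j\ge r\ge\rho^{a}\ge\rho_k^{a}$, so the pair $(k,j)$ is admissible, as required. The main obstacle is Step 1. The comparison in the statement is with the ball centred at the same point $x$, whereas the Assouad covering only produces nearby balls. The argument above resolves this by charging the mass of each small covering ball to the light ball $B(x,\rho_k)$ of any bad point it contains. The factor $(r/\rho)^{-s\varepsilon/2}$ gained from the slack between $s'$ and $s(1+\varepsilon)$ then makes the dyadic series summable.
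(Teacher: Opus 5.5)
The paper gives no proof of this lemma; it is quoted from \cite{Angelini_Packing}, so there is no in-paper argument to compare with. Judged on its own, your Borel--Cantelli argument is correct whenever $s=\dim_A E>0$.

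The charging step in Step~1 is sound. Take grid cubes of side exactly $r_j$. Each covering ball $P$ of radius $\rho_k/2$ that meets $A_{k,j}\cap Q_i$ at some point $x_P$ satisfies $\mu(P)\le\mu(B(x_P,\rho_k))<t_{k,j}\,\mu(3Q_i)$. Since $Q_i\cap E\subset B(y,\sqrt{n}\,r_j)$ for any $y\in Q_i\cap E$, the Assouad bound gives
$$\mu(A_{k,j})\le 3^nC(2\sqrt{n})^{s'}(r_j/\rho_k)^{-s\varepsilon/2}\le 3^nC(2\sqrt{n})^{s'}\rho_k^{(1-a)s\varepsilon/2}.$$
Steps~2 and~3 then go through as written. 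Two routine points are worth stating explicitly:
\begin{itemize}
\item $A_{k,j}$ is Borel, because $x\mapsto\mu(B(x,\rho))$ is upper semicontinuous for closed balls.
\item The edge effect $r_j>1$ never occurs: $r\le 1$ forces the dyadic $r_j\in[r,2r)$ to satisfy $r_j\le 1$.
\end{itemize}

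The one thing to fix is your treatment of $s=0$. It is not a case needing a separate check: the statement is false there, so no argument can cover it. Take $\mu=\frac12(\delta_0+\delta_{1/2})$ on $\mathbb{R}$. Then $\dim_A E=0$, so the factor $(4r/\rho)^{s(1+\varepsilon)}$ equals $1$. Yet for both atoms $x$ and every $\rho<\frac12$,
$$\mu(B(x,1))=1>\tfrac12=\mu(B(x,\rho)),$$
so the conclusion fails on a set of full $\mu$-measure.

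Replacing $s$ by some $\eta>0$, as you suggest, only proves the inequality with exponent $\eta(1+\varepsilon)$, which is a genuinely weaker statement. This is exactly where your summability step $\sum_k(k+1)2^{-k(1-a)s\varepsilon/2}<\infty$ uses $s>0$. So you should state the lemma with the hypothesis $s>0$, or with $s$ replaced by any positive number $\ge\dim_A E$.

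This restriction costs nothing for Lemma~\ref{lem:threshold_lower_bound}. When $\dim_A E=0$ one also has $\dim_P\mu=0$, so its conclusion reduces to the trivial bound $0\le\dim_{P,\theta}\mu$.
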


\begin{lemma}[{\cite[Lemma 4.7]{Angelini_Packing}}]\label{lem:threshold_lower_bound}
Let $\mu$ be a Borel probability measure on $\mathbb{R}^n$ with compact
support $E:=\supp\mu$ and let $\theta\in(0,1]$. Then
$$\dim_A E-\frac{\dim_A E-\dim_P\mu}{\theta}\leq\dim_{P,\theta}\mu.$$
\end{lemma}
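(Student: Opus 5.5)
The plan is to verify the characterization of Theorem~\ref{newdef} directly, for every exponent strictly below the left-hand side. Write $A:=\dim_A E$ and $p:=\dim_P\mu$. If $A-(A-p)/\theta\le 0$ there is nothing to prove, so fix $0<s<A-(A-p)/\theta$. Since $\theta\le1$ and $p\le A$, we have $A-(A-p)/\theta\le p$, hence $s<p$.

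\textbf{Choice of parameters.} By continuity in the parameters we can choose $t<p$, $\eta>0$ and $a\in(0,1)$ so that, with $A':=A(1+\eta)$,
\[
s<A'-\frac{A'-t}{\theta}\qquad\text{and}\qquad s+(1-a)A'<t .
\]
The second condition is possible because $s<p$: take $t$ close to $p$ and $a$ close to $1$. Now apply two results, each on a set of full $\mu$-measure, and intersect those sets.
\begin{itemize}
\item By \eqref{eq:hu}, since $t<p$, for $\mu$-a.e.\ $x$ we have $\liminf_{\rho\to0}\rho^{-t}\mu(B(x,\rho))=0$.
\item By Lemma~\ref{lem:assouad_measure_growth} with exponent $a$ and parameter $\eta$, for $\mu$-a.e.\ $x$ there is $\rho_0(x)>0$ such that $\mu(B(x,r))\le (4r/\rho)^{A'}\mu(B(x,\rho))$ whenever $\rho<\rho_0(x)$ and $\rho^a\le r\le1$.
\end{itemize}

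\textbf{Choosing the window.} Fix such an $x$, together with $\varepsilon>0$ and $\delta_0>0$. Pick $\rho<\rho_0(x)$, as small as we like, with $\mu(B(x,\rho))\le\rho^{t}$. Set $\delta(x):=\rho^\theta$, so that $\delta(x)<\delta_0$ for small $\rho$. Then the window $[\delta(x)^{1/\theta},\delta(x)]$ is exactly $[\rho,\rho^\theta]$. We bound $r^{-s}\mu(B(x,r))$ on this window in two ranges.
\begin{itemize}
\item \emph{Range $r\in[\rho^a,\rho^\theta]$.} The growth estimate gives
\[
r^{-s}\mu(B(x,r))\le 4^{A'}\, r^{A'-s}\rho^{t-A'}.
\]
Since $s<A'$, the right-hand side is increasing in $r$. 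It is therefore maximized at $r=\rho^\theta$, where it equals $4^{A'}\rho^{\theta(A'-s)-A'+t}$. The exponent is positive precisely by the first choice of parameters, so this tends to $0$ as $\rho\to0$.
\item \emph{Range $r\in[\rho,\rho^a]$.} Use monotonicity, $\mu(B(x,r))\le\mu(B(x,\rho^a))\le 4^{A'}\rho^{(a-1)A'}\rho^t$. Together with $r^{-s}\le\rho^{-s}$ this gives
\[
r^{-s}\mu(B(x,r))\le 4^{A'}\rho^{\,t-s-(1-a)A'},
\]
which tends to $0$ by the second choice of parameters.
\end{itemize}
Hence, for $\rho$ small enough, $r^{-s}\mu(B(x,r))<\varepsilon$ throughout the window. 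Theorem~\ref{newdef} then gives $\dim_{P,\theta}\mu\ge s$. Letting $s\uparrow A-(A-p)/\theta$ concludes the argument.

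\textbf{Main obstacle.} The delicate point is that Lemma~\ref{lem:assouad_measure_growth} controls only the scales $r\ge\rho^a$ with $a<1$, whereas the window reaches down to $r=\rho$. I handle the leftover range $[\rho,\rho^a]$ by the monotonicity trick above. The cost is a factor $\rho^{-(1-a)A'}$, which is absorbed because $s$ lies strictly below $p$, leaving room for $a$ close to $1$. The other ingredient to get right is the choice $\delta(x)=\rho^\theta$, which makes the bottom of the window coincide with a scale $\rho$ at which the measure is small.
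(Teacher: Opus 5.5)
Your argument is correct. The constraints on $t,\eta,a$ are compatible because $s<A-(A-p)/\theta\le p$. The two ranges always cover the window $[\rho,\rho^\theta]$; when $a<\theta$ or $\theta=1$ the first range is empty and the monotonicity bound handles everything. The full-measure set you work on does not depend on $\varepsilon,\delta_0$, so Theorem~\ref{newdef} applies under any reading of its quantifiers.

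The paper does not prove this lemma itself; it quotes it from \cite{Angelini_Packing}, so there is no in-paper argument to compare against. Your derivation uses exactly the two results the paper records next to it, Theorem~\ref{newdef} and Lemma~\ref{lem:assouad_measure_growth}. The latter is not used anywhere else in the paper, which suggests your argument follows the intended route.
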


In particular, if $\dim_P\mu=n$ then $\dim_{P,\theta}\mu=n$ for every
$\theta\in(0,1]$, and hence $D(\mu)=n$.

\section{Intermediate packing dimension profiles and images under H\"{o}lder maps}

In this section we introduce the intermediate packing dimension profiles,
extending the intermediate dimension $\dim_{P,\theta}\mu$ of
\cite{Angelini_Packing} to arbitrary exponents, and we state our two main
results: Theorem~\ref{Lemma:cotainferior}, which describes the limiting
behavior of the profiles as $\theta\to 0$, and Theorem~\ref{thm:main},
the threshold characterization of full packing dimension for images under
random H\"{o}lder maps; their proofs are given in
Section~\ref{sec:proofs}.

\begin{definition}\label{def:intermediate_profile}
Let $\mu$ be a Borel probability measure with compact support on
$\mathbb{R}^n$, $\theta\in(0,1]$ and let $s>0$. We define the intermediate packing dimension profile of $\mu$ by
$$\dim_{P,\theta}^s\mu = \sup\left\{t\in[0,s]\, :\, \liminf_{r\to 0}
r^{-t}F_{t,s,\theta}^{\mu}(x,r)=0 \ \text{ for } \mu\text{-a.e.\ }
x\in \mathbb{R}^{n}\right\},$$
where $F_{t,s,\theta}^{\mu}(x,r)$ is defined as in Definition \ref{def:packing_intermediate}.
\end{definition}

\begin{remark}\label{thm:lipschitz_profile}
     For $0\leq t\leq s_1\leq s_2$ we have
$\phi_{r,\theta}^{t,s_2}(z)\leq\phi_{r,\theta}^{t,s_1}(z)$, since the two
kernels differ only in their last term and
$r^{\theta(s_2-t)+t}|z|^{-s_2}\leq r^{\theta(s_1-t)+t}|z|^{-s_1}$ whenever
$|z|>r^{\theta}$.
Then we have
$$F_{t,s_2,\theta}^\mu (x,r)\leq F_{t,s_1,\theta}^\mu (x,r),$$
and therefore $\dim_P^{s_1}\mu \leq \dim_P^{s_2}\mu$.

Similarly, only the last term of the kernel depends on $\theta$, and it
is nonincreasing in $\theta$; hence $\theta\mapsto\dim_{P,\theta}^{s}\mu$
is nondecreasing on $(0,1]$, and in particular the limit in part (2) of
Theorem~\ref{Lemma:cotainferior} exists.
\end{remark}
\begin{remark}\label{rem:theta_one}
For $\theta=1$ and $0\leq t\leq s$ one has
$\phi_{r,1}^{t,s}(z)=\min\{1,\ r^{s}|z|^{-s}\}$. Hence $F_{t,s,1}^{\mu}=F_{s}^{\mu}$ and
$$\dim_{P,1}^{s}\mu=\dim_{P}^{s}\mu\qquad\text{for every }s>0.$$
\end{remark}

Our first main result describes the limiting behavior of the
intermediate profiles as $\theta\to 0$. Besides its intrinsic
interest, it is a key ingredient in the proof of Theorem~\ref{thm:main}.

\begin{theorem}\label{Lemma:cotainferior}
    Let $\mu$ be a Borel probability measure with compact support in
    $\mathbb{R}^n$ and let $s\in(0,n]$. Then
    \begin{enumerate}
        \item For every $\theta\in(0,1]$,
        $$(1-\theta)\min\{s,\dim_{P,\theta}\mu\}\leq \dim_P^s\mu\leq s.$$
        \item $\displaystyle\lim_{\theta\to 0}\dim_{P,\theta}^s\mu=\min\left\{s,D(\mu)\right\}$.
    \end{enumerate}
\end{theorem}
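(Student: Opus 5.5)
We treat the two parts separately. In both, the lower bounds come from the window characterization of Theorem~\ref{newdef} combined with a dyadic-annulus decomposition of $F^{\mu}_{t,s,\theta}(x,r)$. The upper bounds are direct kernel comparisons.

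\emph{Part (1), upper bound.} Since $\supp\mu$ is compact with diameter $R$, for $\mu$-a.e.\ $x$ and all small $r$ we have $\min\{1,r^s|x-y|^{-s}\}\geq r^sR^{-s}$ for every $y\in\supp\mu$. Hence $F^\mu_s(x,r)\geq cr^s$, so $r^{-t}F_s^\mu(x,r)\to\infty$ whenever $t>s$, and $\dim_P^s\mu\leq s$.

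\emph{Part (1), lower bound.} Fix $u<\min\{s,\dim_{P,\theta}\mu\}$ and $t<(1-\theta)u$. Theorem~\ref{newdef} gives, for $\mu$-a.e.\ $x$ and each $\varepsilon,\delta_0$, some $\delta<\delta_0$ with $\mu(B(x,\rho))<\varepsilon\rho^u$ for all $\rho\in[\delta^{1/\theta},\delta]$. Put $r=\delta^{1/\theta}$ and split $F^\mu_s(x,r)$ into three pieces:
\begin{itemize}
\item the ball $B(x,r)$, contributing at most $\varepsilon r^u$;
\item the dyadic annuli $2^k r<|x-y|\leq 2^{k+1}r$ up to radius $\delta$, contributing at most $\varepsilon r^s\sum_k(2^kr)^{u-s}\lesssim\varepsilon r^u$, because $u<s$ makes the geometric sum dominated by its smallest term;
\item the region $|x-y|>\delta$, contributing at most $r^s\delta^{-s}=r^{s(1-\theta)}$.
\end{itemize}
Therefore $r^{-t}F^\mu_s(x,r)\lesssim \varepsilon r^{u-t}+r^{s(1-\theta)-t}$. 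Both exponents are positive because $t<(1-\theta)u\leq \min\{u,(1-\theta)s\}$. Letting $\delta_0\to0$ along a countable sequence $\varepsilon_j\to 0$, and discarding a null set for each $j$, gives $\liminf_{r\to0}r^{-t}F_s^\mu(x,r)=0$ a.e. Hence $\dim_P^s\mu\geq(1-\theta)\min\{s,\dim_{P,\theta}\mu\}$.

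\emph{Part (2), upper bound.} For $s\leq n$, Remark~\ref{thm:lipschitz_profile} gives $\phi^{t,n}_{r,\theta}\leq\phi^{t,s}_{r,\theta}$, so $F^\mu_{t,n,\theta}\leq F^\mu_{t,s,\theta}$. Hence $\dim^s_{P,\theta}\mu\leq\dim_{P,\theta}\mu$, and trivially $\dim^s_{P,\theta}\mu\leq s$. Letting $\theta\to0$ yields $\lim_{\theta\to 0}\dim^s_{P,\theta}\mu\leq\min\{s,D(\mu)\}$.

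\emph{Part (2), lower bound.} This is the main obstacle. If one uses the window for $\dim_{P,\theta}\mu$ with the same $\theta$, the far region $|z|>r^\theta$ of the intermediate kernel gives only $r^{-t}\cdot r^{\theta(s-t)+t}r^{-\theta s}=r^{-\theta t}$, which does not decay. The remedy is to use windows for the smaller parameter $\theta^2$.

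Take $u<\min\{s,\dim_{P,\theta^2}\mu\}$ and windows $[\delta^{1/\theta^2},\delta]$, and set $r=\delta^{1/\theta^2}$, so that $r^\theta=\delta^{1/\theta}$ lies well inside the window. Estimate $F^\mu_{t,s,\theta}(x,r)$ region by region:
\begin{itemize}
\item on $B(x,r)$: at most $\varepsilon r^u$;
\item on $r<|z|\leq r^\theta$ (kernel $r^t|z|^{-t}$): at most $\varepsilon r^t\sum\rho^{u-t}\lesssim\varepsilon r^{t}r^{\theta(u-t)}$ when $t<u$;
\item on $r^\theta<|z|\leq\delta$ (kernel $r^{\theta(s-t)+t}|z|^{-s}$, with $u<s$): at most $\varepsilon r^{\theta(s-t)+t}r^{\theta(u-s)}=\varepsilon r^{t+\theta(u-t)}$;
\item on $|z|>\delta$: at most $r^{\theta(s-t)+t}\delta^{-s}=r^{t+\theta(s-t)-\theta^2 s}$.
\end{itemize}
After multiplying by $r^{-t}$, every term tends to $0$ provided $t<u$ and $t<(1-\theta)s$. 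The same countable-$\varepsilon$ diagonal argument as in part (1) then yields
$$\dim^s_{P,\theta}\mu\geq\min\{(1-\theta)s,\ \dim_{P,\theta^2}\mu\}.$$
Since $\dim_{P,\theta^2}\mu\to D(\mu)$ as $\theta\to0$, this gives $\lim_{\theta\to0}\dim^s_{P,\theta}\mu\geq\min\{s,D(\mu)\}$. The limit exists by the monotonicity in $\theta$ noted in Remark~\ref{thm:lipschitz_profile}.
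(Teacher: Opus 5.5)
Your proof is correct, but it is organized differently from the paper's.

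\emph{Part (1).} The paper does not use Theorem~\ref{newdef} here. It chains the kernel comparisons $F_s^\mu\le F_{t,s,\theta}^\mu\le F_{t,n,\theta}^\mu+r^{t(1-\theta)}$ and feeds in the integral definition of $\dim_{P,\theta}\mu$. You instead estimate $F_s^\mu$ directly on a window via dyadic annuli. Your estimate actually proves more than you state. The two error terms are $\varepsilon r^{u-t}$ and $r^{s(1-\theta)-t}$, so $t<\min\{u,(1-\theta)s\}$ already suffices, and you get $\dim_P^s\mu\ge\min\{(1-\theta)s,\dim_{P,\theta}\mu\}$. The paper's extra factor $(1-\theta)$ on $\dim_{P,\theta}\mu$ comes from bounding the tail of $F_s^\mu$ through $\phi^{t,s}_{r,\theta}$, which only gives $r^{t(1-\theta)}$.

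\emph{Part (2), lower bound.} The paper splits into two cases.
\begin{itemize}
\item When $s\le D(\mu)$, it passes through an index-$(s/n)$ fractional Brownian motion, Theorem~\ref{ProjPacking}, Lemma~\ref{lem:threshold_lower_bound} and Corollary~\ref{cor:FBM}, and hence through Burrell's probabilistic estimates.
\item When $D(\mu)<s$, it combines $\phi^{t,s}_{r,\theta}\le\phi^{t,n}_{r,\theta}+r^{t(1-\theta)}$, the exponent shift $\phi^{u,s}_{r,\theta}\le r^{(u-t)(1-\theta)}\phi^{t,s}_{r,\theta}$ for $u<t$, and monotonicity in $\theta$ with an auxiliary $\theta'<\theta(t-u)/t$.
\end{itemize}
The underlying idea is the one you isolated: absorb the non-decaying far-region term $r^{-\theta t}$ by working at a smaller intermediate parameter. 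Your choice of windows at $\theta^2$ puts $r^\theta$ inside the window. This treats both cases at once, is purely deterministic, and yields the explicit bound $\dim^s_{P,\theta}\mu\ge\min\{(1-\theta)s,\dim_{P,\theta^2}\mu\}$.

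\emph{What each route buys.} The paper's route gives $\dim^s_{P,\theta}\mu\ge\min\{s,D(\mu)\}$ at every fixed $\theta$ directly. You recover this from your bound together with monotonicity in $\theta$. The paper's $D(\mu)<s$ case also needs only the integral definition rather than Theorem~\ref{newdef}. Your argument avoids the probabilistic detour entirely. That keeps Theorem~\ref{Lemma:cotainferior} independent of the random-map machinery it is later used to support in the proof of Theorem~\ref{thm:main}.
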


Our second main result is the threshold characterization of full packing
dimension for images under random $\alpha$-H\"{o}lder maps.

\begin{theorem}\label{thm:main}
    Let $(\Omega,\mathcal{F},\mathbb{P})$ be a probability space, $\mu$ be a Borel probability measure with compact support on
    $\mathbb{R}^n$, $m\in\mathbb{N}$ and $0<\alpha\leq 1$. If
    $\{f_\omega:\operatorname{supp}(\mu)\to\mathbb{R}^m,\,\omega\in\Omega\}$
    is a family of $\alpha$-Hölder continuous, $\sigma(\mathcal{F}\otimes
    \mathcal{B}(\operatorname{supp}\mu))$-measurable functions such that
    for every $\theta\in(0,1]$ there exists $c(\theta)>0$ with
    $$\mathbb{P}\bigl(\{\omega\,:\,|f_\omega(x)-f_\omega(y)|\leq r\}\bigr)\leq
    c(\theta)\,\phi_{r^{1/\alpha},\theta}^{m\alpha,m\alpha}(x-y)$$
    for all $x,y\in\operatorname{supp}(\mu)$ and $r>0$, then
    
    $$\dim_P\mu_{f_\omega}=m \text{ almost surely}\quad\Longleftrightarrow\quad
    m\alpha\leq D(\mu).$$
\end{theorem}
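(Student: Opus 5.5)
We plan to prove Theorem~\ref{thm:main} by going through the intermediate dimension of the image measure, as the introduction indicates. We write $\nu=\mu_{f_\omega}$. The guiding idea is that a deterministic Hölder bound and a probabilistic capacity bound (the analogues of Lemmas~\ref{Th1} and~\ref{Th2}) together pin down $\dim_{P,\theta}\nu$ in terms of the intermediate profile $\dim_{P,\theta}^{\alpha m}\mu$. The threshold statement then follows from Theorem~\ref{Lemma:cotainferior}(2).

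\textbf{Upper bound.} Since $f_\omega$ is $\alpha$-Hölder, we have $B(x,r)\subset f_\omega^{-1}\bigl(B(f_\omega(x),cr^\alpha)\bigr)$. Hence
\[
\nu\bigl(B(f_\omega(x),cr^\alpha)\bigr)\ge\mu(B(x,r)).
\]
Feeding this into the window characterization of Theorem~\ref{newdef} with $\rho=cr^{\alpha}$, a window $[\delta^{1/\theta},\delta]$ for $\nu$ corresponds to a window of the same ratio for $\mu$. This gives
\[
\dim_{P,\theta}\nu\le \alpha^{-1}\dim_{P,\theta}\mu\quad\text{for every }\omega,
\]
and in particular $\dim_P\nu\le\min\{m,\alpha^{-1}\dim_P\mu\}$.

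\textbf{Probabilistic lower bound.} Fix $\theta$ and $t<\dim^{\alpha m}_{P,\theta}\mu$. We integrate the hypothesis:
\[
\mathbb E\,\nu\bigl(B(f_\omega(x),r)\bigr)=\int \mathbb P\bigl(|f_\omega(x)-f_\omega(y)|\le r\bigr)\,d\mu(y)\le c(\theta)\,F^{\mu}_{\alpha m,\alpha m,\theta}\bigl(x,r^{1/\alpha}\bigr),
\]
which we then compare with the kernel $\phi^{t,\alpha m}$. This step has to be done carefully, since the kernels with different $t$ differ.

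We then apply Fubini over $\mathbb P\times\mu$ along a geometric sequence of scales $r_k\to 0$ chosen so that the liminf in Definition~\ref{def:intermediate_profile} is realized. Borel--Cantelli, or Fatou applied to $\liminf$, should show that almost surely, for $\mu$-a.e.\ $x$, we have $\liminf r^{-t/\alpha}\nu(B(f_\omega(x),r))=0$ along windows. The windows must be preserved; here the $\theta$ in the kernel's third term encodes exactly the window $[\delta^{1/\theta},\delta]$. The upshot is
\[
\dim_{P,\theta}\nu\ge\alpha^{-1}\dim^{\alpha m}_{P,\theta}\mu\quad\text{a.s.}
\]
For $\theta=1$ this recovers $\dim_P\nu\ge\alpha^{-1}\dim_P^{\alpha m}\mu$, in agreement with Xiao's formula.

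\textbf{Conclusion.} If $\alpha m\le D(\mu)$, then Theorem~\ref{Lemma:cotainferior}(1) combined with monotonicity gives
\[
\dim_P^{\alpha m}\mu\ge(1-\theta)\min\{\alpha m,\dim_{P,\theta}\mu\}\ge(1-\theta)\alpha m,
\]
since $\dim_{P,\theta}\mu\ge D(\mu)$. Letting $\theta\to0$ yields $\dim_P^{\alpha m}\mu=\alpha m$, so the lower bound at $\theta=1$ gives $\dim_P\nu\ge m$. Hence $\dim_P\nu=m$ almost surely.

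Conversely, suppose $\dim_P\nu=m$ almost surely. By Lemma~\ref{lem:threshold_lower_bound} applied to $\nu$, where $\nu$ lives in $\mathbb R^m$ so that $\dim_A\operatorname{supp}\nu\le m$, we get $\dim_{P,\theta}\nu=m$ for all $\theta$. The deterministic bound then gives $m\le\alpha^{-1}\dim_{P,\theta}\mu$ for every $\theta$, and letting $\theta\to0$ yields $\alpha m\le D(\mu)$.

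\textbf{Main obstacle.} The hard step is the probabilistic lower bound: passing from an expectation estimate at single scales to an almost-sure liminf statement that holds uniformly over whole windows $[\delta^{1/\theta},\delta]$. We would first try a dyadic discretization of each window together with a union bound. The worry is that the window contains about $\log(1/\delta)$ dyadic scales, and this logarithmic loss has to be absorbed by an $\varepsilon$ slack in the exponent.

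Interestingly, the converse direction needs only the deterministic bound, and the forward direction needs only $\theta=1$ plus Theorem~\ref{Lemma:cotainferior}(1). If the window issue proves troublesome, the proof can therefore rely on the $\theta=1$ case of the lower bound alone, whose Fubini/Borel--Cantelli argument is the classical one of Xiao.
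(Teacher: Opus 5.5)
Your proof is correct, and it reaches the threshold by a more economical route than the paper.

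For necessity, you replace Lemma~\ref{Th1} by the ball inclusion $\mu(B(x,r))\le\nu(B(f_\omega(x),cr^\alpha))$ fed into Theorem~\ref{newdef}. This gives only $\dim_{P,\theta}\nu\le\alpha^{-1}\dim_{P,\theta}\mu$, not the paper's sharper $\alpha^{-1}\dim^{m\alpha}_{P,\theta}\mu$. Nothing is lost, though, since Lemma~\ref{lem:main_direction1} immediately weakens to your bound via monotonicity in the upper index. One small correction: the windows do not correspond with exactly the same ratio. With $\eta=(\delta/c)^{1/\alpha}$, the scales $r$ with $cr^\alpha\in[\delta^{1/\theta},\delta]$ form $[K\eta^{1/\theta},\eta]$, where $K=c^{(1/\theta-1)/\alpha}$. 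For $c>1$, the missing piece $[\eta^{1/\theta},K\eta^{1/\theta}]$ is handled by monotonicity of $r\mapsto\mu(B(x,r))$, at the cost of a harmless factor $K^{\alpha t}$.

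For sufficiency, the paper applies Lemma~\ref{Th2} along $\theta_j\to0$, which needs Burrell's truncated kernel and Lemma~\ref{DesigualdadBurrell}. It then uses part (2) of Theorem~\ref{Lemma:cotainferior}, whose proof for $s\le D(\mu)$ in turn goes through fractional Brownian motion, Theorem~\ref{ProjPacking} and Corollary~\ref{cor:FBM}. You note instead that part (1) alone already gives $\dim_P^{m\alpha}\mu=m\alpha$ once $m\alpha\le D(\mu)$. So only the $\theta=1$ bound $\dim_P\nu\ge\alpha^{-1}\dim_P^{m\alpha}\mu$ is required. At $\theta=1$, $\widetilde\phi^s_{r,1}$ is just the indicator of $B(0,r)$ and Burrell's inequality reduces to the hypothesis itself. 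The argument is then a single-scale Fatou--Fubini one. Apply Fatou for fixed $x$ along its own liminf-realizing sequence, which depends on $x$ and so cannot be a fixed geometric one, then apply Fubini; Borel--Cantelli is not needed.

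Since $\dim^{m\alpha}_{P,\theta}\mu\le\dim^{m\alpha}_{P}\mu$ for every $\theta$, the paper's small-$\theta$ bounds add nothing for this theorem. What its heavier machinery buys is the exact value of $\dim_{P,\theta}\mu_X$ at every $\theta$ (Corollary~\ref{cor:FBM}) and the limit identity in Theorem~\ref{Lemma:cotainferior}(2). Your worry about a $\log(1/\delta)$ union-bound loss for general $\theta$ is therefore moot here. For the record, the paper avoids it because $\widetilde\phi^s_{r,\theta}$ dominates $(r/\delta)^s\mathbbm{1}_{B(0,\delta)}$ for all $\delta\in[r,r^\theta]$ simultaneously. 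One expectation and one application of Fatou then control a whole window.
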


The proofs of Theorems~\ref{Lemma:cotainferior} and~\ref{thm:main} are
deferred to Section~\ref{sec:proofs}. In the remainder of this section
we establish the two estimates on image measures on which they rely. Both
adapt arguments of Burrell \cite{Burrell}, where the analogous bounds
are obtained for the intermediate dimensions of random images of sets.

\begin{lemma}\label{Th1}
    Let $\mu$ be a Borel probability measure on $\mathbb{R}^n$ with compact
    support $E:=\operatorname{supp}(\mu)$, let $\theta\in(0,1]$,
    $m\in\mathbb{N}$, and $f:E\to \mathbb{R}^m$. If there exist $c>0$ and
    $0<\alpha \leq 1$ such that
    \begin{equation}\label{Holder_Condition}
        |f(x)-f(y)|\leq c\, |x-y|^{\alpha}
    \end{equation}
    for all $x,y\in E$, then
    $$\dim_{P,\theta} \mu_f \leq \frac{\dim_{P,\theta}^{m\alpha}\mu}{\alpha}.$$
\end{lemma}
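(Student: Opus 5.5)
The plan is to compare the kernels pointwise through the H\"older condition, and then transfer the defining $\liminf$ condition from $\mu_f$ back to $\mu$. Since $\mu_f$ lives on $\mathbb{R}^m$, $\dim_{P,\theta}\mu_f$ is defined with the kernel $\phi^{t,m}_{r,\theta}$ and $F^{\mu_f}_{t,m,\theta}$. It therefore suffices to prove the following: whenever $0\le t\le m$ satisfies
$$\liminf_{r\to0} r^{-t}F^{\mu_f}_{t,m,\theta}(z,r)=0 \quad\text{for }\mu_f\text{-a.e. }z,$$
we also have
$$\liminf_{\rho\to0}\rho^{-\alpha t}F^{\mu}_{\alpha t,\alpha m,\theta}(x,\rho)=0\quad\text{for }\mu\text{-a.e. }x.$$
Then $\alpha t\le \dim^{m\alpha}_{P,\theta}\mu$, because $\alpha t\in[0,\alpha m]$ is admissible in Definition~\ref{def:intermediate_profile}. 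Taking the supremum over such $t$ gives the lemma.

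\emph{Step 1: kernel comparison.} Fix $x,y\in E$, $\rho>0$, and set $r=c\rho^\alpha$. Using $|f(x)-f(y)|\le c|x-y|^\alpha$ and the fact that each term of $\phi^{t,m}_{r,\theta}$ is nonincreasing in $|z|$, we compare term by term. The constant term stays $1$. The middle term gives
$$r^t|f(x)-f(y)|^{-t}\ge (\rho/|x-y|)^{\alpha t}.$$
For the last term we get
$$r^{\theta(m-t)+t}|f(x)-f(y)|^{-m}\ge c^{\theta(m-t)+t-m}\,\rho^{\theta(\alpha m-\alpha t)+\alpha t}|x-y|^{-\alpha m}.$$
The right-hand side is exactly the last term of $\phi^{\alpha t,\alpha m}_{\rho,\theta}(x-y)$, up to a constant. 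Hence
$$\phi^{\alpha t,\alpha m}_{\rho,\theta}(x-y)\le C\,\phi^{t,m}_{c\rho^\alpha,\theta}(f(x)-f(y)),\qquad C=\max\{1,c^{-(\theta(m-t)+t-m)}\}.$$
The case $f(x)=f(y)$ is trivial, since the right-hand side then equals $1$.

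\emph{Step 2: integrate.} By the change of variables for pushforward measures ($f$ is continuous, hence Borel),
$$F^{\mu}_{\alpha t,\alpha m,\theta}(x,\rho)\le C\int\phi^{t,m}_{c\rho^\alpha,\theta}(f(x)-f(y))\,d\mu(y)=C\,F^{\mu_f}_{t,m,\theta}(f(x),c\rho^\alpha).$$
Consequently
$$\rho^{-\alpha t}F^{\mu}_{\alpha t,\alpha m,\theta}(x,\rho)\le C c^{t}\, r^{-t}F^{\mu_f}_{t,m,\theta}(f(x),r).$$
Since $\rho\mapsto c\rho^\alpha$ is an increasing bijection of $(0,\infty)$ with $\rho\to0\iff r\to0$, the $\liminf$ on the left vanishes whenever the one on the right does.

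\emph{Step 3: null sets.} Let $N\subset\mathbb{R}^m$ be the $\mu_f$-null set where the hypothesis fails. Then $\mu(f^{-1}(N))=\mu_f(N)=0$, so the conclusion holds for $\mu$-a.e. $x$.

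The only delicate point is the exponent bookkeeping in Step 1. One must check that the exponent $\theta(m-t)+t$ scales under $r=c\rho^\alpha$ exactly to $\theta(\alpha m-\alpha t)+\alpha t$, so that the image kernel dominates the $(\alpha t,\alpha m)$-kernel with a constant independent of $\rho$, $x$ and $y$. This is what forces the profile exponent $m\alpha$ rather than $m$.
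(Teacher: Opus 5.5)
Your proof is correct and follows essentially the same route as the paper: the H\"older condition gives the pointwise comparison $\phi^{\alpha t,\alpha m}_{\rho,\theta}(x-y)\le C\,\phi^{t,m}_{c\rho^{\alpha},\theta}(f(x)-f(y))$, which is integrated against $\mu$ and rewritten through the pushforward as $F^{\mu_f}_{t,m,\theta}$. The only difference is that you argue directly, pulling admissible exponents and null sets for $\mu_f$ back to $\mu$ via $f^{-1}$, whereas the paper argues by contraposition from a positive-measure failure set and then invokes downward closure of the admissible exponents, a step your direct version does not need.
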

\begin{proof}
    By definition and inequality \eqref{Holder_Condition}, for all $x,y\in E$,
    \begin{align*}
        \phi_{r^{1/\alpha},\theta}^{s\alpha,m\alpha}(x-y)
        &= \min \left\{1,\, \frac{r^s}{|x-y|^{s\alpha}},\, \frac{r^{\theta (m-s)+s}}{|x-y|^{m\alpha}} \right\}\\
        &\leq \min \left\{1,\, \frac{c^s\,r^s}{|f(x)-f(y)|^s},\, \frac{c^m\,r^{\theta(m-s)+s}}{|f(x)-f(y)|^m} \right\}\\
        &\leq C\, \phi_{r,\theta}^{s,m}(f(x)-f(y)),
    \end{align*}
    for some constant $C>0$.

Let $s$ be such that $s\alpha>\dim_{P,\theta}^{m\alpha}\mu$. Then there
exists a set $A\subset E$ with $\mu(A)>0$ such that, for every $x\in A$,
there exist $\epsilon(x)>0$ and $r_0(x)>0$ with

\begin{align*}
        \epsilon(x)\,r^{s} &< \int\phi_{r^{1/\alpha},\theta}^{s\alpha,m\alpha}(x-y)\,d\mu(y)\\
        &\leq C\int\phi_{r,\theta}^{s,m}(f(x)-f(y))\, d\mu(y)\\
        &= C\int\phi_{r,\theta}^{s,m}(f(x)-w)\, d\mu_f(w)\\
        &=C\,F_{s,m,\theta}^{\mu_f}\bigl(f(x),r\bigr).
    \end{align*}
    
for all $0<r<r_0(x)$, by the kernel comparison above and the definition
of the image measure. Hence
$\liminf_{r\to0}r^{-s}F_{s,m,\theta}^{\mu_f}(f(x),r)>0$ for every
$x\in A$, and since
$A\subseteq f^{-1}\bigl(\{y:\liminf_{r\to0}r^{-s}F_{s,m,\theta}^{\mu_f}(y,r)>0\}\bigr)$,
this set has positive $\mu_f$-measure. Thus the defining condition of
$\dim_{P,\theta}^{m}\mu_f$ fails at the exponent $s$, and since the set
of admissible exponents is downward closed
(see \cite[Remark 4.4]{Angelini_Packing}),
$\dim_{P,\theta}^{m}\mu_f\leq s$. As $\mu_f$ is a measure on
$\mathbb{R}^m$, $\dim_{P,\theta}\mu_f=\dim_{P,\theta}^{m}\mu_f\leq s$,
and since $s$ with $s\alpha>\dim_{P,\theta}^{m\alpha}\mu$ was arbitrary,
the conclusion follows.
\end{proof}

To handle random maps, and in particular fractional Brownian motion, we will
use a truncated version of the kernel $\phi_{r,\theta}^{s,t}$ introduced by
Burrell in \cite{Burrell}, together with a key inequality proved in the same
article.

\begin{definition}\label{defkernel2}
    Let $\theta\in(0,1]$, $s\geq 0$ and $r\in (0,1)$. Define
    $\widetilde{\phi}_{r,\theta}^s:\mathbb{R}^n\to\mathbb{R}$ by
    $$\widetilde{\phi}_{r,\theta}^s(x)=\begin{cases}
        1, & 0\leq |x|<r,\\[1mm]
        \left(\dfrac{r}{|x|}\right)^s, & r\leq |x|\leq r^\theta,\\[1mm]
        0, & |x| > r^\theta.
    \end{cases}$$
\end{definition}

The following lemma is proved in \cite{Burrell}.
\begin{lemma}\label{DesigualdadBurrell}
    Let $(\Omega,\mathcal{F},\mathbb{P})$ be a probability space, $E\subset\mathbb{R}^n$
    a compact set, $\theta\in(0,1]$, $\gamma>0$, $m\in\mathbb{N}$ and
    $s\in[0,m)$. If $\{f_\omega: E\to \mathbb{R}^m,\, \omega\in\Omega\}$ is a
    family of continuous functions that are measurable with respect
    to $\sigma(\mathcal{F}\otimes\mathcal{B}(E))$, and there exists $c>0$
    such that
    $$\mathbb{P}\bigl(\{\omega\, :\, |f_\omega (x)-f_\omega (y)|\leq r\}\bigr)
    \leq c\, \phi_{r^\gamma,\theta}^{m/\gamma,m/\gamma}(x-y)$$
    for all $x,y\in E$ and $r>0$, then there exists $C_{s,m}>0$ such that
    $$\int \widetilde{\phi}_{r,\theta}^s(f_\omega(x)-f_\omega(y))\, d\mathbb{P}(\omega)
    \leq C_{s,m}\, \phi_{r^\gamma,\theta}^{s/\gamma,m/\gamma}(x-y).$$
\end{lemma}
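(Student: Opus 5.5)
The plan is to bound the expectation by decomposing the truncated kernel $\widetilde{\phi}_{r,\theta}^s$ into dyadic shells. Each shell is controlled by the probabilistic hypothesis, and the resulting geometric sums are then evaluated in three regimes of $|x-y|$. Measurability of $(\omega,x,y)\mapsto \widetilde{\phi}_{r,\theta}^s(f_\omega(x)-f_\omega(y))$ follows from the $\sigma(\mathcal{F}\otimes\mathcal{B}(E))$-measurability of $f_\omega$, since $\widetilde{\phi}_{r,\theta}^s$ is Borel. So the integral makes sense, and it suffices to work at fixed $x,y$.

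First simplify the hypothesis. When both exponents coincide, the $\theta$-term in the kernel is redundant:
$$\phi_{\rho^\gamma,\theta}^{m/\gamma,m/\gamma}(z)=\min\{1,\rho^m|z|^{-m/\gamma}\}.$$
Write $d:=|x-y|^{1/\gamma}$ and $\Delta_\omega:=|f_\omega(x)-f_\omega(y)|$. The hypothesis then reads
$$\mathbb{P}(\Delta_\omega\le\rho)\le c\min\{1,(\rho/d)^m\}\quad\text{for all }\rho>0.$$
Similarly, the target kernel is
$$\phi_{r^\gamma,\theta}^{s/\gamma,m/\gamma}(x-y)=\min\{1,(r/d)^s,\,r^{\theta(m-s)+s}d^{-m}\}.$$

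Next use the pointwise dyadic bound. Let $K$ be the largest integer with $2^K r\le r^\theta$. Then
$$\widetilde{\phi}_{r,\theta}^s(z)\le \mathbbm{1}_{\{|z|<r\}}+\sum_{k=0}^{K}2^{-ks}\,\mathbbm{1}_{\{|z|\le 2^{k+1}r\}}.$$
Taking expectations and inserting the hypothesis gives
$$\int \widetilde{\phi}_{r,\theta}^s(f_\omega(x)-f_\omega(y))\,d\mathbb{P}\le c\Big(\min\{1,(r/d)^m\}+\sum_{k=0}^K 2^{-ks}\min\{1,(2^{k+1}r/d)^m\}\Big).$$

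The main step is to compare this sum with the three-term minimum. The summand increases like $2^{k(m-s)}$ while $2^{k+1}r<d$, and decreases like $2^{-ks}$ afterwards. Because $s<m$, both pieces are geometric sums dominated by their largest term, with constants depending only on $s$ and $m$.

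\begin{itemize}
\item If $d\le r$, every term is at most $2^{-ks}$, so the sum is $O(1)$. In this regime $1$ is the minimum of the target kernel.
\item If $r<d\le r^\theta$, the peak is at $2^k\approx d/r$, giving a bound of order $(r/d)^s$. Here $(r/d)^s\le 1$ and $(r/d)^s\le r^{\theta(m-s)+s}d^{-m}$ because $d\le r^\theta$, so $(r/d)^s$ is the minimum.
\item If $d>r^\theta$, all terms lie in the increasing regime. The sum is dominated by $k=K$, with $2^K\approx r^{\theta-1}$, giving $r^{(1-\theta)s}(r^\theta/d)^m=r^{\theta(m-s)+s}d^{-m}$. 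This is the minimum since $d>r^\theta$.
\end{itemize}

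The first term $\min\{1,(r/d)^m\}$ is dominated by the target kernel in every regime: it is at most $1$, at most $(r/d)^s$ when $d\ge r$, and at most $r^{\theta(m-s)+s}d^{-m}$ since $r^m\le r^{\theta(m-s)+s}$.

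I expect the only delicate point to be this regime bookkeeping. In particular, the strict inequality $s<m$ is exactly what makes the increasing geometric series summable up to its last term, so the constant $C_{s,m}$ stays independent of $r$, $\theta$ and $x,y$.
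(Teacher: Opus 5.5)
\textbf{Verdict: the argument is correct for $0<s<m$ but fails at the endpoint $s=0$, which the statement includes; the repair is one line.}

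The parts that check out are the pointwise dyadic bound, the use of the hypothesis at the radii $2^{k+1}r$, and the three-regime comparison. In the third regime the top term may have $2^{K+1}r>d$, but $\min\{1,x\}\le x$ covers that.

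At $s=0$ all the weights $2^{-ks}$ equal $1$. For $d\le r$ your bound becomes $c(K+2)$ with $K=\lfloor(1-\theta)\log_2(1/r)\rfloor$. This is unbounded as $r\to0$ when $\theta<1$. For $r<d\le r^\theta$, the terms past the peak contribute about $\log_2(r^\theta/d)$ instead of $O((r/d)^s)$. In both regimes the target kernel equals $1$, so no constant independent of $r$ comes out.

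So $s<m$ only controls the increasing piece. The decreasing piece needs $s>0$, and your constant grows like $(1-2^{-s})^{-1}$.

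The fix: $\widetilde{\phi}_{r,\theta}^0=\mathbbm{1}_{\{|z|\le r^\theta\}}$, so the expectation is $\mathbb{P}(\Delta_\omega\le r^\theta)\le c\min\{1,(r^\theta/d)^m\}$. This is exactly $c$ times the target kernel $\min\{1,1,r^{\theta m}d^{-m}\}$. Only $s>0$ is actually used later, in Lemma~\ref{Th2}.

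The paper gives no proof of this lemma; it quotes it from Burrell. A route that avoids the endpoint issue altogether is the layer-cake identity
\[
\int\widetilde{\phi}_{r,\theta}^{s}\bigl(f_\omega(x)-f_\omega(y)\bigr)\,d\mathbb{P}(\omega)
= r^{s(1-\theta)}\,\mathbb{P}(\Delta_\omega\le r^{\theta})
+ s\,r^{s}\int_{r}^{r^{\theta}}u^{-s-1}\,\mathbb{P}(\Delta_\omega\le u)\,du .
\]
After this, the same three regimes give the bound with constant at most $2cm/(m-s)$, uniformly down to $s=0$.

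The difference from your argument is only bookkeeping. The layer-cake weights (the atom $r^{s(1-\theta)}$ plus $s\,r^s u^{-s-1}\,du$) have total mass $1$. Your decomposition instead charges each ball $\{|z|\le 2^{k+1}r\}$ the full value $2^{-ks}$ rather than the increment $2^{-ks}-2^{-(k+1)s}$. The telescoped version keeps $2^{-Ks}$ only for $k=K$ and drops the separate ball $\{|z|<r\}$.

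With telescoped coefficients your argument becomes the exact discrete analogue and covers $s=0$. Otherwise it runs exactly as you wrote: $s<m$ sums the increasing piece, and the splits at $d=r$ and $d=r^\theta$ pick out the active term of the minimum.
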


Combining Definition~\ref{defkernel2} and Lemma~\ref{DesigualdadBurrell}, we now
establish a lower bound for the packing intermediate profile of the image
measure under a random family of continuous maps satisfying certain
conditions.

\begin{lemma}\label{Th2}
    Let $(\Omega,\mathcal{F},\mathbb{P})$ be a probability space, $\mu$ a
    Borel probability measure on $\mathbb{R}^n$ with compact support,
    $\theta\in(0,1]$, $m\in\mathbb{N}$ and $\gamma\geq 1$. If
    $\{f_\omega:\operatorname{supp}(\mu)\to\mathbb{R}^m,\,\omega\in\Omega\}$
    is a family of continuous, $\sigma(\mathcal{F}\otimes\mathcal{B}(\operatorname{supp}\mu))$-measurable
    functions such that there exists $c>0$ with
    $$\mathbb{P}\bigl(\{\omega\,:\,|f_\omega(x)-f_\omega(y)|\leq r\}\bigr)\leq c\,
    \phi_{r^\gamma,\theta}^{m/\gamma,m/\gamma}(x-y)$$
    for all $x,y\in\operatorname{supp}(\mu)$ and $r>0$, then
    $$\dim_{P,\theta}\mu_{f_\omega}\geq \gamma\,\dim_{P,\theta}^{m/\gamma}\mu$$
    for $\mathbb{P}$-a.e.\ $\omega\in\Omega$.
\end{lemma}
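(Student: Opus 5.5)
Fix $t<\gamma\,\dim_{P,\theta}^{m/\gamma}\mu$ with $t<m$; it suffices to show $\dim_{P,\theta}\mu_{f_\omega}\geq t$ almost surely, and then let $t$ increase along a countable sequence. The key tool is the characterization of Theorem~\ref{newdef}: we show that for $\mathbb{P}$-a.e.\ $\omega$ and $\mu$-a.e.\ $x$, the quantity $r^{-t}\mu_{f_\omega}(B(f_\omega(x),r))$ is small along a whole window of scales $[\delta^{1/\theta},\delta]$ for arbitrarily small $\delta$. Since $\mu_{f_\omega}$-a.e.\ points are of the form $f_\omega(x)$ with $x$ $\mu$-typical, this is what is needed.

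First, since $t/\gamma<\dim_{P,\theta}^{m/\gamma}\mu$, pick $t'$ with $t/\gamma<t'/\gamma<\dim_{P,\theta}^{m/\gamma}\mu$. By the definition of the profile (and downward closedness), for $\mu$-a.e.\ $x$ we have $\liminf_{\rho\to0}\rho^{-t'/\gamma}F^\mu_{t'/\gamma,m/\gamma,\theta}(x,\rho)=0$. Substituting $\rho=r^\gamma$ gives a sequence $r_k(x)\to0$ with $r_k^{-t'}\int\phi_{r_k^\gamma,\theta}^{t'/\gamma,m/\gamma}(x-y)\,d\mu(y)\leq 2^{-k}$. Next, apply Lemma~\ref{DesigualdadBurrell} with exponent $t'<m$ and integrate in $y$ via Fubini (measurability follows from the $\sigma(\mathcal{F}\otimes\mathcal{B})$ hypothesis). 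This gives
\[\mathbb{E}\int\widetilde\phi^{t'}_{r_k,\theta}(f_\omega(x)-f_\omega(y))\,d\mu(y)\leq C\,r_k^{t'}2^{-k}.\]
By Markov and Borel--Cantelli, for a.e.\ $\omega$ we get $\int\widetilde\phi^{t'}_{r_k,\theta}(f_\omega(x)-w)\,d\mu_{f_\omega}(w)\leq r_k^{t'}k^{2}2^{-k}$ for all large $k$. A further Fubini over $x$ upgrades this to: for a.e.\ $\omega$, it holds for $\mu$-a.e.\ $x$.

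Then convert the truncated-kernel bound into a window estimate. Set $\delta=r_k^\theta$. For $\rho\in[r_k,r_k^\theta]$ we have $\widetilde\phi^{t'}_{r_k,\theta}\geq(r_k/\rho)^{t'}$ on $B(0,\rho)$. Hence
\[\mu_{f_\omega}(B(f_\omega(x),\rho))\leq(\rho/r_k)^{t'}r_k^{t'}k^22^{-k}=\rho^{t'}k^22^{-k},\]
so $\rho^{-t}\mu_{f_\omega}(B(f_\omega(x),\rho))\leq\rho^{t'-t}k^22^{-k}\to0$ uniformly over the window $[\delta^{1/\theta},\delta]$. Since $\delta=r_k^\theta\to0$, the condition of Theorem~\ref{newdef} holds with exponent $t$ at $\mu_{f_\omega}$-a.e.\ point, giving $\dim_{P,\theta}\mu_{f_\omega}\geq t$.

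The main obstacle is the measurability and order of quantifiers. The scales $r_k(x)$ depend on $x$, so the Borel--Cantelli step must be done pointwise in $x$ and then transferred by Fubini on $\Omega\times\supp\mu$. I would first choose $r_k(x)$ measurably, for instance restricted to dyadic scales $2^{-j}$ selected by a measurable rule. The convergence $\rho^{t'-t}\to0$ is not actually needed, since $k^22^{-k}\to0$ already suffices, but the slack $t'>t$ keeps the estimate clean when passing to the supremum.
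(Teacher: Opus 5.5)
Your proof is correct and follows the paper's argument. You integrate Burrell's inequality (Lemma~\ref{DesigualdadBurrell}) against $\mu$, pass to almost every $\omega$ with Fubini, and turn the truncated-kernel bound into smallness of $\delta^{-t}\mu_{f_\omega}(B(f_\omega(x),\delta))$ on the whole window $[r_k,r_k^{\theta}]$, which is the characterization in Theorem~\ref{newdef}.

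The only difference is the probabilistic step. You use Markov and Borel--Cantelli along a measurably selected dyadic sequence $r_k(x)$, whereas the paper applies Fatou's lemma to $\liminf_{r\to0}$ so that the $x$-dependent sequence disappears before Fubini is invoked. Both handle the quantifier and measurability issue. Yours has the small extra that, almost surely, all but finitely many of the windows $[r_k(x),r_k(x)^{\theta}]$, which depend only on $x$, are good.
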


\begin{proof}
Let $0<s/\gamma<\dim_{P,\theta}^{m/\gamma}\mu$, with $s<m$. By definition of
$\dim_{P,\theta}^{m/\gamma}\mu$, for $\mu$-a.e.\ $x$ and every $\epsilon>0$
there exists a sequence $\{\rho_k\}$ with $\rho_k\to 0$ such that
$$\int\phi_{\rho_k,\theta}^{s/\gamma,m/\gamma}(x-y)\,d\mu(y)<\epsilon\,\rho_k^{s/\gamma}.$$
Setting $r_k:=\rho_k^{1/\gamma}$, this becomes
$$\int\phi_{r_k^\gamma,\theta}^{s/\gamma,m/\gamma}(x-y)\,d\mu(y)<\epsilon\, r_k^{s}.$$
Fix $x$ in the full-measure set above. Integrating the inequality of
Lemma~\ref{DesigualdadBurrell} against $\mu$ and using Fubini's theorem,
for every $r\in(0,1)$,
\begin{align*}
&\int\Bigl[r^{-s}\int\widetilde{\phi}_{r,\theta}^{s}
\bigl(f_\omega(x)-z\bigr)\,d\mu_{f_\omega}(z)\Bigr]d\mathbb{P}(\omega)\\
&=\int r^{-s}\int\widetilde{\phi}_{r,\theta}^{s}
\bigl(f_\omega(x)-f_\omega(y)\bigr)\,d\mu(y)\,d\mathbb{P}(\omega)\\
&\leq C_{s,m}\,r^{-s}\int
\phi_{r^{\gamma},\theta}^{s/\gamma,m/\gamma}(x-y)\,d\mu(y).
\end{align*}
Along the sequence $\{r_k\}$ the right-hand
side is at most $C_{s,m}\,\epsilon$; hence, by Fatou's lemma,
$$\int\liminf_{r\to0}\Bigl[r^{-s}\int\widetilde{\phi}_{r,\theta}^{s}
\bigl(f_\omega(x)-z\bigr)\,d\mu_{f_\omega}(z)\Bigr]d\mathbb{P}(\omega)
\leq C_{s,m}\,\epsilon.$$
Note that the left-hand side no longer involves the sequence $\{r_k\}$,
which depended on $x$. Since $\epsilon>0$ was arbitrary, by using
Fubini's theorem we have,
for $\mathbb{P}$-a.e.\ $\omega\in\Omega$,
\begin{equation}\label{eq:fatou}
\liminf_{r\to0}\,r^{-s}\int\widetilde{\phi}_{r,\theta}^{s}
\bigl(f_\omega(x)-z\bigr)\,d\mu_{f_\omega}(z)=0
\qquad\text{for }\mu\text{-a.e.\ }x.
\end{equation}
Fix such an $\omega$, let $x$ be a point where \eqref{eq:fatou} holds,
and choose a sequence $r_k\to0$
realizing the $\liminf$. From the middle branch of
$\widetilde{\phi}_{r_k,\theta}^{s}$, for every
$\delta\in[r_k,r_k^{\theta}]$,
$$\Bigl(\frac{r_k}{\delta}\Bigr)^{s}\,
\mathbbm{1}_{B(f_\omega(x),\delta)}(z)\leq \widetilde{\phi}_{r_k,\theta}^{s}\bigl(f_\omega(x)-z\bigr),$$
whence $$\delta^{-s}\mu_{f_\omega}\bigl(B(f_\omega(x),\delta)\bigr)\leq
r_k^{-s}\int\widetilde{\phi}_{r_k,\theta}^{s}
\bigl(f_\omega(x)-z\bigr)\,d\mu_{f_\omega}(z).$$
Hence, for every $\epsilon'>0$ there are arbitrarily small windows
$[r_k,r_k^{\theta}]$ on which
$\delta^{-s}\mu_{f_\omega}(B(f_\omega(x),\delta))<\epsilon'$ for
all $\delta\in[r_k,r_k^\theta]$. Therefore
$s\leq\dim_{P,\theta}\mu_{f_\omega}$, and letting
$s\to\gamma\,\dim_{P,\theta}^{m/\gamma}\mu$ along a countable sequence
(intersecting the corresponding almost sure events) completes the proof.
\end{proof}

Combining Lemmas~\ref{Th1} and~\ref{Th2} we obtain the exact value of
the intermediate packing intermediate dimension of images under fractional Brownian motion;
at $\theta=1$, by Remark~\ref{rem:theta_one}, this recovers Xiao's
theorem \cite[Theorem 3.1]{xiao1997packing} for compactly supported
probability measures.

\begin{corollary}\label{cor:FBM}
    Let $\theta \in (0,1]$, $m,n\in \mathbb{N}$, $0<\alpha<1$, let
    $X:\mathbb{R}^n\to\mathbb{R}^m$ be an index-$\alpha$ fractional Brownian
    motion, and let $\mu$ be a Borel probability measure with compact support
    in $\mathbb{R}^n$. Then
    $$\dim_{P,\theta}\mu_{X}=\frac{\dim_{P,\theta}^{m\alpha}\mu}{\alpha}
    \qquad \text{a.s.}$$
\end{corollary}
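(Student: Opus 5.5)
The corollary follows by combining Lemma~\ref{Th1} and Lemma~\ref{Th2}.

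For the upper bound, fix $\varepsilon\in(0,\alpha)$. Almost surely the sample path of $X$ is $(\alpha-\varepsilon)$-H\"older on the compact set $\supp\mu$, so Lemma~\ref{Th1} with exponent $\alpha-\varepsilon$ gives
$$\dim_{P,\theta}\mu_X\le \frac{\dim_{P,\theta}^{m(\alpha-\varepsilon)}\mu}{\alpha-\varepsilon}\le \frac{\dim_{P,\theta}^{m\alpha}\mu}{\alpha-\varepsilon}.$$
The second inequality uses the monotonicity in $s$ from Remark~\ref{thm:lipschitz_profile}, which holds at every fixed $\theta$ because the kernel comparison there does not depend on $\theta$. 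Letting $\varepsilon\to0$ along a countable sequence, and intersecting the almost sure events, gives $\dim_{P,\theta}\mu_X\le \alpha^{-1}\dim_{P,\theta}^{m\alpha}\mu$ almost surely.

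For the lower bound, apply Lemma~\ref{Th2} with $\gamma=1/\alpha\ge1$, so that $m/\gamma=m\alpha$. Continuity of the paths holds almost surely, and joint measurability follows from the fact that $X$ is a continuous random field; one may discard a null set of $\omega$ or modify $X$ there. What remains is the probability hypothesis
$$\mathbb{P}\bigl(|X(x)-X(y)|\le r\bigr)\le c\,\phi^{m\alpha,m\alpha}_{r^{1/\alpha},\theta}(x-y).$$
For $t=s=m\alpha$ the kernel is $\phi^{m\alpha,m\alpha}_{\rho,\theta}(z)=\min\{1,\rho^{m\alpha}|z|^{-m\alpha}\}$, and with $\rho=r^{1/\alpha}$ this equals $\min\{1,r^m|x-y|^{-m\alpha}\}$. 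The increment $X(x)-X(y)$ is a centred Gaussian vector in $\mathbb{R}^m$ with i.i.d.\ coordinates of variance $|x-y|^{2\alpha}$. Its density is therefore bounded by $(2\pi)^{-m/2}|x-y|^{-m\alpha}$, which gives
$$\mathbb{P}\bigl(|X(x)-X(y)|\le r\bigr)\le \min\{1,\,C_m r^m|x-y|^{-m\alpha}\}\le C_m\,\phi^{m\alpha,m\alpha}_{r^{1/\alpha},\theta}(x-y),$$
with $C_m$ independent of $\theta$. Lemma~\ref{Th2} then yields $\dim_{P,\theta}\mu_X\ge \alpha^{-1}\dim_{P,\theta}^{m\alpha}\mu$ almost surely. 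Combining the two bounds proves the corollary.

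The only point needing care is the upper bound: Lemma~\ref{Th1} requires exact $\alpha$-H\"older continuity, which fractional Brownian motion does not have almost surely. This is why I pass through the exponents $\alpha-\varepsilon$ and use monotonicity of the profile in $s$. That step gives the final value only if $\varepsilon\mapsto(\alpha-\varepsilon)^{-1}\dim^{m\alpha}_{P,\theta}\mu$ tends to $\alpha^{-1}\dim^{m\alpha}_{P,\theta}\mu$, which is immediate because the profile is evaluated at the fixed exponent $m\alpha$ and only the prefactor varies.
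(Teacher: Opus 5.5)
Your proof is correct and follows the paper's argument. The upper bound comes from Lemma~\ref{Th1} at H\"older exponent $\alpha-\varepsilon$ plus the monotonicity of Remark~\ref{thm:lipschitz_profile}, and the lower bound from Lemma~\ref{Th2} with $\gamma=1/\alpha$. The only differences are that you check the small-ball hypothesis directly from the Gaussian density rather than citing Burrell's Theorem 3.4, and that you address joint measurability explicitly, which the paper does not.

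One small slip: with the natural value $C_m=2^{-m/2}/\Gamma(m/2+1)<1$, the inequality $\min\{1,C_m r^m|x-y|^{-m\alpha}\}\le C_m\,\phi^{m\alpha,m\alpha}_{r^{1/\alpha},\theta}(x-y)$ fails when $r^m|x-y|^{-m\alpha}$ is large. The constant should be $\max\{1,C_m\}$ instead, and this changes nothing since any constant works in Lemma~\ref{Th2}.
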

\begin{proof}
Let $0<\epsilon<\alpha$. As recalled in section 2, there
exists $M>0$ such that
$$|X(x)-X(y)|\leq M\,|x-y|^{\alpha-\epsilon}\qquad a.s.\text{ for all }x,y\in\operatorname{supp}(\mu).$$
Moreover, by \cite[Theorem 3.4]{Burrell}, we have
$$\mathbb{P}\bigl(|X(x)-X(y)|\leq r\bigr)\leq 2^{\max\{m,n\}}\,
\phi_{r^{\gamma},\theta}^{m/\gamma,m/\gamma}(x-y)$$
for all $x,y\in\operatorname{supp}(\mu)$, where $\gamma=1/\alpha$.

Applying Lemma~\ref{Th1} to the Hölder condition with exponent
$\alpha-\epsilon$, we obtain
$$\dim_{P,\theta}\mu_X\leq \frac{\dim_{P,\theta}^{m(\alpha-\epsilon)}\mu}{\alpha-\epsilon}
\qquad \text{a.s.}$$
Now, by using Remark ~\ref{thm:lipschitz_profile} and letting $\epsilon\to 0$ yields
$$\dim_{P,\theta}\mu_X\leq \frac{\dim_{P,\theta}^{m\alpha}\mu}{\alpha}
\qquad \text{a.s.}$$
On the other hand, applying Lemma~\ref{Th2} with $\gamma=1/\alpha$ gives
the reverse inequality
$$\dim_{P,\theta}\mu_X\geq \gamma\,\dim_{P,\theta}^{m/\gamma}\mu
=\frac{\dim_{P,\theta}^{m\alpha}\mu}{\alpha}\qquad \text{a.s.}$$
Combining both, the result follows.
\end{proof}

\section{Proofs of the main theorems}\label{sec:proofs}

We begin with the proof of Theorem~\ref{Lemma:cotainferior}, and then
turn to Theorem~\ref{thm:main}, whose two implications are established
separately in Lemmas~\ref{lem:main_direction1}
and~\ref{lem:main_direction2}.

\begin{proof}[Proof of Theorem \ref{Lemma:cotainferior}]
We first prove $(1)$. The upper bound $\dim_P^s\mu\leq s$ is immediate from
the definition of the classical packing dimension profile.

For the lower bound, we begin with the comparison inequality
\begin{equation}\label{equation111}
F_{t,s,\theta}^\mu (x,r)\leq F_{t,n,\theta}^{\mu}(x,r)+ r^{t(1-\theta)},
\end{equation}
valid for all $x\in\mathbb{R}^n$, $r\in(0,1)$ and $t\in(0,s]$.

Next, observe that for every $t\in(0,s]$,
$$F_s^{\mu}(x,r)\leq F_{t,s,\theta}^\mu (x,r),$$
which follows by direct comparison of the kernels branch by branch.
Combining this with \eqref{equation111},
\begin{equation}\label{equation222}
F_s^\mu(x,r)\leq F_{t,n,\theta}^\mu(x,r)+r^{t(1-\theta)}.
\end{equation}

Fix $\theta\in(0,1]$ and let $0<t<\min\{s,\dim_{P,\theta}\mu\}$. By the
definition of $\dim_{P,\theta}\mu=\dim_{P,\theta}^n\mu$, for every $\epsilon>0$
and $\mu$-a.e.\ $x$, there exists a sequence $r_k\to 0$ such that
$$F_{t,n,\theta}^\mu(x,r_k)<\epsilon\, r_k^t.$$
Substituting into \eqref{equation222} and using $r_k<1$ together with
$t(1-\theta)\leq t$,
$$F_s^\mu(x,r_k)<\epsilon\, r_k^t+r_k^{t(1-\theta)}\leq (1+\epsilon)\,r_k^{t(1-\theta)},$$
so that
$$\liminf_{k\to\infty}r_k^{-t(1-\theta)}F_s^\mu(x,r_k)\leq 1+\epsilon<\infty.$$

Now observe that $F_s^\mu$ does not depend on $t$; therefore, for any
$t'<t(1-\theta)$,
$$r_k^{-t'}F_s^\mu(x,r_k)=r_k^{t(1-\theta)-t'}\cdot r_k^{-t(1-\theta)}F_s^\mu(x,r_k)
\leq r_k^{t(1-\theta)-t'}(1+\epsilon)\longrightarrow 0$$
as $r_k\to 0$. Hence $t'\leq \dim_P^s\mu$.
Letting $t'\to t(1-\theta)$ and then $t\to\min\{s,\dim_{P,\theta}\mu\}$, we
conclude
$$(1-\theta)\min\{s,\dim_{P,\theta}\mu\}\leq \dim_P^s\mu,$$
completing the proof of $(1)$.

To prove part $(2)$, note that the case $s\leq D(\mu)$ is absorbed by part
$(1)$ of the theorem. Indeed, letting $\theta\to 0$ in part $(1)$ and using $s\leq
D(\mu)$ gives $\dim_P^s\mu=s$. If $s=n$ the conclusion is immediate, since
$\dim_{P,\theta}^n\mu=\dim_{P,\theta}\mu$; hence assume $s<n$ and let
$X:\mathbb{R}^n\to\mathbb{R}^n$ be an index-$\alpha$ fractional Brownian
motion with $\alpha=s/n\in(0,1)$. By Theorem~\ref{ProjPacking},
$\dim_P\mu_X=\dim_P^s\mu/\alpha=n$ almost surely. Arguing as in the proof
of Lemma~\ref{lem:main_direction1} (via
Lemma~\ref{lem:threshold_lower_bound}), this yields
$\dim_{P,\theta}\mu_X=n$, and combining with Corollary~\ref{cor:FBM},
$$n=\dim_{P,\theta}\mu_X=\frac{\dim_{P,\theta}^s\mu}{\alpha}\leq n$$
for every $\theta\in(0,1]$, which implies
$s=\lim_{\theta\to 0}\dim_{P,\theta}^s\mu=\min\{s,D(\mu)\}$.

Hence, suppose $D(\mu)<s$ and let $t<D(\mu)$. By examining the kernel
separately on the intervals $[0,r^\theta]$ and $(r^\theta,\infty)$, it is not
hard to see that
\begin{equation}\label{equation333}
    \phi_{r,\theta}^{t,s}(x)\leq \phi_{r,\theta}^{t,n}(x) + r^{t(1-\theta)}.
\end{equation}
Now, by our choice of $t$, for every $\theta\in(0,1]$, for $\mu$-a.e.\ $x$
and for every $\epsilon\in(0,1)$, there exists a sequence $\{r_k\}$ with
$r_k\to 0$ such that
$$\int\phi_{r_k,\theta}^{t,n}(x-y)\,d\mu(y)\leq \epsilon\, r_k^t.$$
Combining this with inequality \eqref{equation333} we obtain
$$\int \phi_{r_k,\theta}^{t,s}(x-y)\,d\mu(y)\leq 2\, r_k^{t(1-\theta)}.$$
Therefore
\begin{equation}\label{equation444}
\liminf_{r\to 0} r^{-t(1-\theta)}\int \phi_{r,\theta}^{t,s}(x-y)\,d\mu(y)\leq 2
\end{equation}
for every $\theta\in(0,1]$ and $\mu$-a.e.\ $x$.

We now upgrade \eqref{equation444} to the exponent $t$. Fix $\theta\in(0,1]$
and let $0<u<t$. By examining the kernel we have
\begin{equation}\label{equation555}
\phi_{r,\theta}^{u,s}(x)\leq r^{(u-t)(1-\theta)}\,\phi_{r,\theta}^{t,s}(x),
\end{equation}
as can be checked directly on each of the three branches of the kernel, and
that $\phi_{r,\theta}^{t,s}$ is nonincreasing in $\theta$, so that
\begin{equation}\label{equation666}
F_{t,s,\theta}^\mu(x,r)\leq F_{t,s,\theta'}^\mu(x,r)\qquad \text{whenever }\theta'\leq\theta.
\end{equation}
Choose $\theta'\in(0,\theta]$ with $\theta'<\theta(t-u)/t$, and let
$\{r_j\}$ be a sequence realizing \eqref{equation444} for the parameter
$\theta'$, so that $F_{t,s,\theta'}^\mu(x,r_j)\leq 3\,r_j^{t(1-\theta')}$ for
all $j$ large enough. Combining \eqref{equation555} and \eqref{equation666},
\begin{align*}
    r_j^{-u}F_{u,s,\theta}^\mu(x,r_j)&\leq r_j^{-u+(u-t)(1-\theta)}F_{t,s,\theta}^\mu(x,r_j)\\
&\leq r_j^{-u+(u-t)(1-\theta)}F_{t,s,\theta'}^\mu(x,r_j)\\
&\leq 3\, r_j^{\,\theta(t-u)-t\theta'},
\end{align*}

By the choice of $\theta'$, this exponent is strictly positive, and hence
$$\liminf_{r\to 0}r^{-u}F_{u,s,\theta}^\mu(x,r)=0\qquad \text{for }\mu\text{-a.e.\ }x.$$
Since
$0<u<t$ was arbitrary, we conclude $\dim_{P,\theta}^s\mu\geq t$ for every
$\theta\in(0,1]$. As $t<D(\mu)$ was arbitrary, this gives
$$\lim_{\theta\to 0}\dim_{P,\theta}^s\mu\geq D(\mu).$$
For the reverse inequality, monotonicity of the profile in its upper index
gives $\dim_{P,\theta}^s\mu\leq \dim_{P,\theta}^n\mu=\dim_{P,\theta}\mu$ for
$s\leq n$, whence $\lim_{\theta\to 0}\dim_{P,\theta}^s\mu\leq D(\mu)$.
Therefore
$$\lim_{\theta\to 0}\dim_{P,\theta}^s\mu=D(\mu)=\min\{s,D(\mu)\},$$
since $D(\mu)<s$ in this case, completing the proof.
\end{proof}

We now turn to Theorem~\ref{thm:main}, whose proof we split into two
lemmas, one for each implication. The forward implication is
deterministic, through the
upper bound of Lemma~\ref{Th1}. The reverse implication is a
probabilistic estimate, through the lower bound of
Lemma~\ref{Th2}.

\begin{lemma}\label{lem:main_direction1}
    Let $\mu$ be a Borel probability measure with compact support on
    $\mathbb{R}^n$, $m\in\mathbb{N}$, $0<\alpha\leq 1$, and let
    $f:\operatorname{supp}(\mu)\to\mathbb{R}^m$ be an $\alpha$-Hölder
    continuous function. If $\dim_P\mu_f=m$, then $m\alpha\leq D(\mu)$.
\end{lemma}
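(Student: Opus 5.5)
The plan is to pass from $\dim_P\mu_f=m$ to full intermediate dimension of the image, and then pull this back to $\mu$ through Lemma~\ref{Th1} and Theorem~\ref{Lemma:cotainferior}.

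\emph{Step 1: full packing dimension of $\mu_f$ forces full intermediate dimension.} The measure $\mu_f$ is compactly supported in $\mathbb{R}^m$, because $f$ is continuous on the compact set $\supp\mu$. Its support $F:=\supp\mu_f$ therefore satisfies $\dim_A F\le m$. By hypothesis $\dim_P\mu_f=m$. Applying Lemma~\ref{lem:threshold_lower_bound} to $\mu_f$ in $\mathbb{R}^m$ gives
$$\dim_{P,\theta}\mu_f\ \ge\ \dim_A F-\frac{\dim_A F-m}{\theta}\ \ge\ m$$
for every $\theta\in(0,1]$, since $\dim_A F-m\le 0$. The reverse bound $\dim_{P,\theta}\mu_f\le m$ holds by definition, so $\dim_{P,\theta}\mu_f=m$ for all $\theta$. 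This is exactly the remark following Lemma~\ref{lem:threshold_lower_bound}, now stated in ambient dimension $m$.

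\emph{Step 2: pull back via Lemma~\ref{Th1}.} Since $f$ is $\alpha$-H\"older on $\supp\mu$, Lemma~\ref{Th1} gives
$$m=\dim_{P,\theta}\mu_f\le \frac{\dim_{P,\theta}^{m\alpha}\mu}{\alpha}\qquad\text{for every }\theta\in(0,1].$$
Hence $\dim_{P,\theta}^{m\alpha}\mu\ge m\alpha$. Definition~\ref{def:intermediate_profile} gives $\dim_{P,\theta}^{m\alpha}\mu\le m\alpha$, so in fact $\dim_{P,\theta}^{m\alpha}\mu=m\alpha$ for every $\theta$.

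\emph{Step 3: let $\theta\to0$.} If $m\alpha\le n$, part (2) of Theorem~\ref{Lemma:cotainferior} with $s=m\alpha\in(0,n]$ gives
$$m\alpha=\lim_{\theta\to0}\dim_{P,\theta}^{m\alpha}\mu=\min\{m\alpha,D(\mu)\},$$
which forces $m\alpha\le D(\mu)$.

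The case $m\alpha>n$ needs separate care, because Theorem~\ref{Lemma:cotainferior} is only stated for $s\le n$. I expect this case to be the main technical obstacle. There I would show it cannot occur. One route is to check directly from the kernel that $\dim_{P,\theta}^{s}\mu\le\dim_{P,\theta}\mu\le n<m\alpha$ when $s\ge n$, contradicting Step 2. Here $\dim_{P,\theta}\mu=\dim_{P,\theta}^n\mu$. Note that Remark~\ref{thm:lipschitz_profile} yields the inequality in the wrong direction, so it does not help here. A more robust route avoids the profile entirely: an $\alpha$-H\"older image of a set of packing dimension at most $n$ has upper box dimension at most $n/\alpha$. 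Refining this with Lemma~\ref{lem:assouad_measure_growth}-type covering estimates, I would try to show directly that $\dim_P\mu_f\le \dim_P\mu/\alpha$. Then $\dim_P\mu_f=m$ gives $m\alpha\le\dim_P\mu\le n$, so the case $m\alpha>n$ is excluded.

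Even when $m\alpha\le n$, I would note the sanity check $D(\mu)\ge m\alpha$ is consistent with $D(\mu)\le\dim_P\mu$. The genuine content of the proof is Steps 1–3, and I expect only the boundary case $m\alpha>n$ to need real work.
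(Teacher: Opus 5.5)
Your proposal is correct and follows the paper's proof: Lemma~\ref{lem:threshold_lower_bound} applied to $\mu_f$ gives $\dim_{P,\theta}\mu_f=m$ for every $\theta$, and Lemma~\ref{Th1} pulls this back to $\dim_{P,\theta}^{m\alpha}\mu\ge m\alpha$. Letting $\theta\to0$ then finishes; your appeal to part (2) of Theorem~\ref{Lemma:cotainferior} only uses its easy half $\dim_{P,\theta}^{m\alpha}\mu\le\dim_{P,\theta}^{n}\mu=\dim_{P,\theta}\mu$, which is exactly the monotonicity step the paper uses directly. The case $m\alpha>n$, which that monotonicity step tacitly excludes, is not a real obstacle: for $r<1$ the kernel $\phi_{r,\theta}^{t,s}$ equals $1$ on $B(0,r)$, so $F_{t,s,\theta}^{\mu}(x,r)\ge\mu(B(x,r))$, and by \eqref{eq:hu} every admissible $t$ satisfies $t\le\dim_P\mu\le n$; hence $\dim_{P,\theta}^{s}\mu\le n$ for all $s$, which contradicts $\dim_{P,\theta}^{m\alpha}\mu=m\alpha>n$ from your Step 2.
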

\begin{proof}
    Since $\operatorname{supp}(\mu_f)\subset\mathbb{R}^m$, we have
    $\dim_P\mu_f\leq \dim_A(\operatorname{supp}\mu_f)\leq m$. Hence, the
    assumption $\dim_P\mu_f=m$ forces $\dim_A(\operatorname{supp}\mu_f)=m$
    as well. Applying Lemma~\ref{lem:threshold_lower_bound},
    $$\dim_{P,\theta}\mu_f\geq \dim_A(\operatorname{supp}\mu_f)-
    \frac{\dim_A(\operatorname{supp}\mu_f)-\dim_P\mu_f}{\theta}=m,$$
    we
    obtain $\dim_{P,\theta}\mu_f=m$ for every $\theta\in(0,1]$.

    Combining this with Lemma~\ref{Th1} and the monotonicity of
    $\dim_{P,\theta}^t\mu$ in $t$, we obtain
    \begin{align*}
        m=\dim_{P,\theta}\mu_f &\leq \frac{\dim_{P,\theta}^{m\alpha}\mu}{\alpha}\\
        &\leq \frac{\dim_{P,\theta}^{n}\mu}{\alpha}\\
        &= \frac{\dim_{P,\theta}\mu}{\alpha}
    \end{align*}
    for every $\theta\in(0,1]$. Therefore
    $$m\alpha\leq \lim_{\theta\to 0}\dim_{P,\theta}\mu=D(\mu). $$
\end{proof}

\begin{lemma}\label{lem:main_direction2}
    Let $(\Omega,\mathcal{F},\mathbb{P})$ be a probability space, $\mu$ a
    Borel probability measure with compact support on $\mathbb{R}^n$,
    $m\in\mathbb{N}$ and $0<\alpha\leq 1$. Suppose that
    $\{f_\omega:\operatorname{supp}(\mu)\to\mathbb{R}^m,\,\omega\in\Omega\}$
    is a family of continuous,
    $\sigma(\mathcal{F}\otimes\mathcal{B}(\operatorname{supp}\mu))$-measurable
    functions such that for every $\theta\in(0,1]$ there exists
    $c(\theta)>0$ with
    $$\mathbb{P}\bigl(\{\omega\,:\,|f_\omega(x)-f_\omega(y)|\leq r\}\bigr)\leq
    c(\theta)\,\phi_{r^{1/\alpha},\theta}^{m\alpha,m\alpha}(x-y)$$
    for all $x,y\in\operatorname{supp}(\mu)$ and $r>0$. If in addition
    $m\alpha\leq D(\mu)$, then
    $$\dim_P\mu_{f_\omega}=m\qquad \text{almost surely.}$$
\end{lemma}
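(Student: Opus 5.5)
We apply Lemma~\ref{Th2} with $\gamma=1/\alpha\geq 1$, separately for each fixed $\theta\in(0,1]$. The hypothesis of the present lemma, with constant $c(\theta)$, is exactly the hypothesis of Lemma~\ref{Th2} for that $\theta$, since $\phi_{r^{1/\alpha},\theta}^{m\alpha,m\alpha}=\phi_{r^\gamma,\theta}^{m/\gamma,m/\gamma}$. Lemma~\ref{Th2} then gives
$$\dim_{P,\theta}\mu_{f_\omega}\geq \frac{\dim_{P,\theta}^{m\alpha}\mu}{\alpha}\qquad\text{for }\mathbb{P}\text{-a.e. }\omega .$$
We will take $\theta$ along the countable sequence $\theta_k=1/k$ and intersect the corresponding full-probability events. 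This yields a single almost sure event on which the inequality holds for every $\theta_k$ simultaneously.

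Next we let $\theta_k\to0$. Since $m\alpha\leq D(\mu)$ and $m\alpha\leq n$ (because $\alpha\le1$ and, if $m>n$... see below), part~(2) of Theorem~\ref{Lemma:cotainferior} with $s=m\alpha$ gives $\lim_{\theta\to0}\dim_{P,\theta}^{m\alpha}\mu=\min\{m\alpha,D(\mu)\}=m\alpha$. The parenthetical needs care: the hypothesis $m\alpha\le D(\mu)\le n$ already forces $s=m\alpha\in(0,n]$, so Theorem~\ref{Lemma:cotainferior} applies. Hence, almost surely,
$$\lim_{k\to\infty}\dim_{P,\theta_k}\mu_{f_\omega}\geq m,$$
that is, $D(\mu_{f_\omega})\geq m$. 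Since $\dim_{P,\theta}\nu\le m$ for any measure $\nu$ on $\mathbb{R}^m$, this gives $D(\mu_{f_\omega})=m$ almost surely.

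The remaining step, which I expect to be the main obstacle, is passing from $D(\mu_{f_\omega})=m$ to $\dim_P\mu_{f_\omega}=m$. Monotonicity only gives $\dim_{P,\theta}\le\dim_{P,1}=\dim_P$, and the point is that this is the right direction: $\dim_P\mu_{f_\omega}\geq \dim_{P,\theta}\mu_{f_\omega}$ for every $\theta$. Recall that $\theta\mapsto\dim_{P,\theta}$ is nondecreasing, by the kernel monotonicity in Remark~\ref{thm:lipschitz_profile} applied with upper index $m$, and that $\dim_{P,1}\nu=\dim_P^m\nu=\dim_P\nu$ for $\nu$ on $\mathbb{R}^m$ by Remark~\ref{rem:theta_one} and \eqref{Corollary3}. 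Therefore
$$m\ge\dim_P\mu_{f_\omega}\ge\lim_{\theta\to0}\dim_{P,\theta}\mu_{f_\omega}\geq m\qquad\text{almost surely}.$$

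Once this monotonicity is secured, the argument does not even need the limit. A single application of Lemma~\ref{Th2} at $\theta=1$ gives the Xiao-type bound $\dim_P\mu_{f_\omega}\ge \dim_P^{m\alpha}\mu/\alpha$. Part~(1) of Theorem~\ref{Lemma:cotainferior}, letting $\theta\to0$ with $m\alpha\le D(\mu)$, gives $\dim_P^{m\alpha}\mu=m\alpha$. Combining the two yields $\dim_P\mu_{f_\omega}\geq m$ almost surely. I would present this shorter route, using part~(1) together with Lemma~\ref{Th2} at $\theta=1$ only, and check carefully that the constant $c(1)$ from the hypothesis suffices.
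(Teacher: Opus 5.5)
Your proof is correct, and the route you say you would actually present differs from the paper's.

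Your first argument is essentially the paper's proof. It applies Lemma~\ref{Th2} along $\theta_k\to0$, intersects the almost sure events, uses part~(2) of Theorem~\ref{Lemma:cotainferior}, and then $\dim_P\mu_{f_\omega}\ge\dim_{P,\theta}\mu_{f_\omega}$. The step you flagged as the ``main obstacle'' is only this one-line monotonicity remark, not a real difficulty.

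Your shorter route is more economical. Since $\theta\mapsto\dim^{m\alpha}_{P,\theta}\mu$ is nondecreasing, the lower bound $\dim^{m\alpha}_{P,\theta}\mu/\alpha$ from Lemma~\ref{Th2} is largest at $\theta=1$, so sending $\theta\to0$ gains nothing. At $\theta=1$ you only need $\dim^{m\alpha}_P\mu=m\alpha$. Part~(1) of Theorem~\ref{Lemma:cotainferior} gives this by an elementary kernel comparison, once you note that $\dim_{P,\theta}\mu\ge D(\mu)\ge m\alpha$ for every $\theta$; state that explicitly. The paper's route instead needs the Marstrand-type limit of part~(2). The paper proves that limit in the case $s\le D(\mu)$ via Xiao's theorem and Corollary~\ref{cor:FBM}.

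Your check that $c(1)$ suffices is fine, and there is nothing to worry about. The kernel $\phi^{m\alpha,m\alpha}_{r^{1/\alpha},\theta}(z)=\min\{1,r^m|z|^{-m\alpha}\}$ does not depend on $\theta$, so the ``for every $\theta$'' hypothesis is a single condition. The paper itself uses exactly this $\theta=1$ mechanism in the proof of Corollary~\ref{cor:final}.

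The paper's route mainly buys thematic consistency: it shows $D(\mu_{f_\omega})=m$ through the intermediate profiles. Your conclusion recovers that too, since $\dim_P\mu_{f_\omega}=m$ on $\mathbb{R}^m$ forces $\dim_{P,\theta}\mu_{f_\omega}=m$ for all $\theta$ by Lemma~\ref{lem:threshold_lower_bound}.
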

\begin{proof}
    Fix a sequence $\theta_j\to 0$. For each $j$, Lemma~\ref{Th2} applied
    with $\gamma=1/\alpha$ yields
    $$\dim_{P,\theta_j}\mu_{f_\omega}\geq \frac{\dim_{P,\theta_j}^{m\alpha}\mu}{\alpha}
    \qquad\text{almost surely.}$$
    Intersecting the corresponding almost-sure events over $j$ (a countable
    intersection, hence still of full probability), we obtain a set
    $\Omega_0\subset\Omega$ with $\mathbb{P}(\Omega_0)=1$ such that the
    above holds for every $j$ and every $\omega\in\Omega_0$.

    Fix $\omega\in\Omega_0$. Since $\dim_{P,\theta}\mu_{f_\omega}\leq
    \dim_P\mu_{f_\omega}$ for every $\theta\in(0,1]$, we have
    $$\dim_P\mu_{f_\omega}\geq \dim_{P,\theta_j}\mu_{f_\omega}\geq
    \frac{\dim_{P,\theta_j}^{m\alpha}\mu}{\alpha}.$$
    Letting $j\to\infty$ and applying Lemma~\ref{Lemma:cotainferior} together
    with the assumption $m\alpha\leq D(\mu)$,
    $$\dim_P\mu_{f_\omega}\geq \frac{1}{\alpha}\lim_{j\to\infty}
    \dim_{P,\theta_j}^{m\alpha}\mu=\frac{m\alpha}{\alpha}=m.$$
    Combined with the trivial bound $\dim_P\mu_{f_\omega}\leq m$, this gives
    $\dim_P\mu_{f_\omega}=m$ for every $\omega\in\Omega_0$, completing the
    proof.
\end{proof}

\begin{proof}[Proof of Theorem \ref{thm:main}]
    Follows directly from Lemmas \ref{lem:main_direction1} and \ref{lem:main_direction2}
\end{proof}

Since
$\theta\mapsto\dim_{P,\theta}\mu$ is nondecreasing and
$\dim_{P,1}\mu=\dim_P\mu$, one always has $D(\mu)\leq\dim_P\mu$; the
following corollaries describe the situation when this inequality is an
equality, in which the profiles admit a closed form and Xiao's theorem
reduces to the packing analogue of Kahane's formula \eqref{eq:kahane}.
Note that by Lemma \ref{lem:threshold_lower_bound}, the equality $D(\mu)=\dim_P\mu$ holds, for instance, whenever
$\dim_P\mu=\dim_A(\operatorname{supp}\mu)$.

\begin{corollary}\label{cor:full_dim_profile}
    Let $\mu$ be a Borel probability measure with compact support on
    $\mathbb{R}^n$ with $\dim_P\mu=D(\mu)$. Then, for every $s>0$,
    $$\dim_P^s\mu=\min\{s,\dim_P\mu\}.$$
\end{corollary}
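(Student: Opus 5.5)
We prove the two inequalities $\dim_P^s\mu\le\min\{s,\dim_P\mu\}$ and $\dim_P^s\mu\ge\min\{s,\dim_P\mu\}$ separately. The upper bound is essentially monotonicity; the lower bound comes from part (1) of Theorem~\ref{Lemma:cotainferior} in the limit $\theta\to0$.

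\emph{Upper bound.} We already have $\dim_P^s\mu\le s$ from part (1) of Theorem~\ref{Lemma:cotainferior}, which holds for $s\le n$; for $s>n$ it follows directly from the definition. It remains to show $\dim_P^s\mu\le\dim_P\mu$.
\begin{itemize}
\item For $s\le n$, Remark~\ref{thm:lipschitz_profile} (with $\theta=1$) and Remark~\ref{rem:theta_one} give $\dim_P^s\mu\le\dim_P^n\mu$, and $\dim_P^n\mu=\dim_P\mu$ by \eqref{Corollary3}.
\item For $s>n$, we have $\min\{1,r^s|z|^{-s}\}\ge\min\{1,r^n|z|^{-n}\}\cdot r^{s-n}$ only up to scale issues, so that route is unclear. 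It is cleaner to argue directly: $F_s^\mu(x,r)\ge\mu(B(x,r))$, since the kernel equals $1$ on $B(x,r)$. So if $\liminf_{r\to 0} r^{-t}F_s^\mu(x,r)=0$ for $\mu$-a.e.\ $x$, then $\liminf_{r\to 0} r^{-t}\mu(B(x,r))=0$ $\mu$-a.e.
\end{itemize}
By \eqref{eq:hu} this gives $t\le\dim_P\mu$. This argument in fact works for every $s>0$ and makes the case split unnecessary.

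\emph{Lower bound.} First take $s\le n$.
\begin{itemize}
\item Part (1) of Theorem~\ref{Lemma:cotainferior} gives $(1-\theta)\min\{s,\dim_{P,\theta}\mu\}\le\dim_P^s\mu$ for every $\theta\in(0,1]$.
\item Let $\theta\to0$. Since $\theta\mapsto\dim_{P,\theta}\mu$ is monotone with limit $D(\mu)$, we obtain $\dim_P^s\mu\ge\min\{s,D(\mu)\}=\min\{s,\dim_P\mu\}$, using the hypothesis $D(\mu)=\dim_P\mu$.
\end{itemize}
Now take $s>n$.
\begin{itemize}
\item By the monotonicity of Remark~\ref{thm:lipschitz_profile} in $s$ (at $\theta=1$), $\dim_P^s\mu\ge\dim_P^n\mu=\dim_P\mu$.
\item Since $\dim_P\mu\le n<s$, this equals $\min\{s,\dim_P\mu\}$.
\end{itemize}

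\emph{Main obstacle.} The main care needed is in the range $s>n$. There Theorem~\ref{Lemma:cotainferior} does not apply, so we must rely only on the monotonicity in the upper index together with the elementary bound $F_s^\mu\ge\mu(B(x,r))$. Both steps are routine, and the corollary should follow without further difficulty.
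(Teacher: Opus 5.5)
Your proof is correct and follows essentially the paper's route. The lower bound $\min\{s,D(\mu)\}\le\dim_P^s\mu$ comes from Theorem~\ref{Lemma:cotainferior} as $\theta\to0$; you apply part (1) directly, whereas the paper cites part (2), which also needs the monotonicity in $\theta$ and Remark~\ref{rem:theta_one} to compare with $\dim_P^s\mu=\dim_{P,1}^s\mu$. The upper bound is the trivial comparison in both.

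Your explicit argument via $F_s^\mu(x,r)\ge\mu(B(x,r))$ and \eqref{eq:hu} for $\dim_P^s\mu\le\dim_P\mu$ fills in a detail the paper's one-line proof leaves implicit. So does your separate treatment of $s>n$, a range where Theorem~\ref{Lemma:cotainferior} is not stated. The abandoned remark about ``scale issues'' should simply be deleted.
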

\begin{proof}
 Applying
    part $(2)$ of Theorem~\ref{Lemma:cotainferior}, we have
    $$\min\{s,\dim_P \mu\}=\min\{s,D(\mu)\}\leq\dim_P^s\mu\leq \min \{ s, \dim_P \mu \}.$$
\end{proof}

\begin{corollary}\label{cor:full_dim_fbm}
    Let $\mu$ be a Borel probability measure with compact support on
    $\mathbb{R}^n$ with $\dim_P\mu=D(\mu)$, and let
    $X:\mathbb{R}^n\to\mathbb{R}^m$ be an index-$\alpha$ fractional
    Brownian motion. Then
    $$\dim_P\mu_X=\min\!\left\{m,\,\frac{\dim_P \mu}{\alpha}\right\}\qquad\text{a.s.}$$
\end{corollary}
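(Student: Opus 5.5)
The plan is to combine Xiao's formula (Theorem~\ref{ProjPacking}) with the closed form of the profile from Corollary~\ref{cor:full_dim_profile}. The whole argument is a short substitution, carried out in three steps.

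\emph{Step 1: identify the profile.} By Theorem~\ref{ProjPacking}, almost surely
$$\dim_P\mu_X=\frac{1}{\alpha}\,\dim_P^{\alpha m}\mu .$$
Since $\alpha m>0$ and the hypothesis $\dim_P\mu=D(\mu)$ is exactly the hypothesis of Corollary~\ref{cor:full_dim_profile}, we may apply that corollary with $s=\alpha m$. This gives
$$\dim_P^{\alpha m}\mu=\min\{\alpha m,\dim_P\mu\}.$$

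\emph{Step 2: substitute and divide.} Dividing by $\alpha$ yields
$$\dim_P\mu_X=\min\Bigl\{m,\frac{\dim_P\mu}{\alpha}\Bigr\}\quad\text{a.s.},$$
which is the claim.

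\emph{Step 3: check the range of the exponent.} This is the only point needing care. Corollary~\ref{cor:full_dim_profile} is stated for every $s>0$, but it rests on part (2) of Theorem~\ref{Lemma:cotainferior}, which assumes $s\in(0,n]$. If $\alpha m>n$, I would not use the corollary and would argue directly with the classical profile instead:
\begin{itemize}
\item[(i)] The kernel $\min\{1,r^s|z|^{-s}\}$ is nonincreasing in $s$, so $F_s^\mu\le F_n^\mu$. Hence, for $s\ge n$, $\dim_P^s\mu\ge\dim_P^n\mu=\dim_P\mu$ by \eqref{Corollary3}.
\item[(ii)] For the reverse inequality I would use the elementary bound $F_s^\mu(x,r)\ge\mu(B(x,r))$. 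Together with \eqref{eq:hu}, this gives $\dim_P^s\mu\le\dim_P\mu$.
\item[(iii)] Since $\dim_P\mu\le n<\alpha m$, (i) and (ii) give $\dim_P^{\alpha m}\mu=\dim_P\mu=\min\{\alpha m,\dim_P\mu\}$, and Step 2 then applies unchanged.
\end{itemize}
The bound $\dim_P^s\mu\le\min\{s,\dim_P\mu\}$ used implicitly in the corollary's proof holds in general: the bound by $s$ is Theorem~\ref{Lemma:cotainferior}(1), and the bound by $\dim_P\mu$ is (ii). Also, Theorem~\ref{ProjPacking} is stated for finite Borel measures, so it applies to our compactly supported probability measure without modification.
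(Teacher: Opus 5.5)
Your proof is correct and is the paper's argument: Xiao's formula (Theorem~\ref{ProjPacking}) combined with Corollary~\ref{cor:full_dim_profile} at $s=\alpha m$. Your Step~3 is a worthwhile addition that the paper omits. Theorem~\ref{Lemma:cotainferior} is only stated for $s\in(0,n]$, while $\alpha m$ may exceed $n$, and your direct argument ($F_s^\mu\le F_n^\mu$ for $s\ge n$, together with $F_s^\mu(x,r)\ge\mu(B(x,r))$) correctly gives $\dim_P^{\alpha m}\mu=\dim_P\mu$ in that case.
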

\begin{proof}
    By Theorem~\ref{ProjPacking},
    $\dim_P\mu_X=\dim_P^{m\alpha}\mu/\alpha$ almost surely, and the result follows by applying 
    Corollary~\ref{cor:full_dim_profile}.
\end{proof}
\section{A dimension threshold for images of analytic sets}\label{sec:sets}

In this final section we prove the analogue of Theorem~\ref{thm:main} for
analytic sets. The corresponding characterization for the intermediate and
box dimensions of images of compact sets was obtained in
\cite[Corollary 3.5]{angelini2025critical}. There it is shown that, for an
index-$\alpha$ fractional Brownian motion $X:\mathbb{R}^n\to \mathbb{R}^m$,
$\dim_\theta X(E)=m$ almost surely if and only if
$\alpha m\leq \dim_{qH}E$, where
$\dim_{qH}E$ denotes the quasi-Hausdorff
dimension of $E$. Theorem~\ref{thm:main_sets} below is the
packing-dimension counterpart of that result, with the parameter $D(E)$
playing the role of $\dim_{qH}E$. The passage from measures to sets rests
on a characterization, due to Xiao, of the packing dimension of the image
of a set in terms of the packing dimensions of the images of measures
supported on it. We begin by recalling the relevant definitions and
results from \cite{xiao1997packing}.

Recall that given an analytic set $E\subset\mathbb{R}^n$, we write $\mathcal{M}_c^+(E)$
for the family of finite Borel measures on $\mathbb{R}^n$ with compact
support contained in $E$. The packing dimension of $E$ can be recovered
from the measures it supports, namely
\begin{equation}\label{eq:dimP_set_measures}
\dim_P E=\sup\{\dim_P\mu\,:\,\mu\in\mathcal{M}_c^+(E)\},
\end{equation}
and the packing dimension profile of a set is defined analogously: for
$s>0$,
$$\dim_P^s E:=\sup\{\dim_P^s\mu\,:\,\mu\in\mathcal{M}_c^+(E)\}.$$

The following two results of Xiao will be needed.

\begin{lemma}[\cite{xiao1997packing}, Lemma 4.3]\label{lem:xiao_sets}
    Let $E\subset\mathbb{R}^n$ be an analytic set. Then, for any
    continuous function $f:\mathbb{R}^n\to\mathbb{R}^m$,
    $$\dim_P f(E)=\sup\{\dim_P\mu_f\,:\,\mu\in\mathcal{M}_c^+(E)\}.$$
\end{lemma}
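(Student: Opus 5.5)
The plan is to prove the two inequalities separately, with $F:=f(E)$.

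\emph{Lower bound.} Fix $\mu\in\mathcal{M}_c^+(E)$. Since $\supp\mu\subset E$ is compact, $\mu_f(F^c)=\mu(f^{-1}(F^c))=\mu(\mathbb{R}^n\setminus f^{-1}(F))\leq \mu(\mathbb{R}^n\setminus E)=0$. So $\mu_f$ gives full mass to $F$. If $\mu_f\neq 0$, the definition $\dim_P\mu_f=\inf\{\dim_P A: \mu_f(A)>0\}$ then gives $\dim_P\mu_f\leq\dim_P F$, because $F$ itself is admissible as a set of positive measure. Here $F$ is analytic, being the continuous image of an analytic set, so it is measurable for the completion of $\mu_f$. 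If needed, I would instead use the compact set $f(\supp\mu)\subset F$, whose $\mu_f$-measure equals $\mu(\mathbb{R}^n)>0$. Taking the supremum over $\mu$ gives "$\geq$".

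\emph{Upper bound.} This direction needs a measure: I would show that for every $t<\dim_P F$ there is $\mu\in\mathcal{M}_c^+(E)$ with $\dim_P\mu_f\geq t$.

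\begin{enumerate}
\item Since $F$ is analytic, apply \eqref{eq:dimP_set_measures} in $\mathbb{R}^m$ to $F$. This yields $\nu\in\mathcal{M}_c^+(F)$ with $\dim_P\nu>t$.
\item Lift $\nu$ to $E$. Consider $G=E\cap f^{-1}(\supp\nu)$, an analytic set with $f(G)=\supp\nu$. By the Jankov--von Neumann uniformization theorem there is a universally measurable section $g:\supp\nu\to G$ with $f\circ g=\mathrm{id}$.
\item Define $\tilde\mu:=\nu\circ g^{-1}$. It is a finite measure concentrated on $G\subset E$, and $\tilde\mu_f=\nu$.
\item Make the support compact. $\tilde\mu$ is a finite Borel measure on the analytic set $E$, so by inner regularity (analytic sets are universally measurable) there is a compact $K\subset G$ with $\tilde\mu(K)>0$. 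Set $\mu=\tilde\mu|_K\in\mathcal{M}_c^+(E)$.
\item Compare dimensions. We have $\mu_f=\nu|_{f(K)}\ll\nu$ with positive mass. Any Borel $A$ with $\mu_f(A)>0$ also has $\nu(A)>0$, hence $\dim_P A\geq\dim_P\nu$. Therefore $\dim_P\mu_f\geq\dim_P\nu>t$.
\end{enumerate}

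Letting $t\uparrow\dim_P F$ gives "$\leq$".

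\emph{Main obstacle.} The delicate step is the measurable selection in step 2 together with the measurability issues in step 4. Jankov--von Neumann gives only a $\sigma(\Sigma^1_1)$-measurable (hence universally measurable) section, so $\nu\circ g^{-1}$ must be defined via the completion of $\nu$. One also has to check that its restriction to a compact set is a genuine Borel measure on $\mathbb{R}^n$.

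If this route becomes messy, a fallback is available. The packing dimension of $F$ is attained through measures because $F$ contains compact subsets $K'$ of packing dimension close to $\dim_P F$, by Joyce--Preiss for analytic sets. For such a compact $K'$, choose a compact $L\subset E$ with $f(L)$ carrying most of a good measure on $K'$. One can then run the selection on compact sets, where a Borel section exists by the Kuratowski--Ryll-Nardzewski theorem applied to the compact-valued map $y\mapsto f^{-1}(y)\cap L$.
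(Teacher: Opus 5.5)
The paper gives no proof of this lemma; it is imported from Xiao, so there is no internal argument to match against. Your proof is correct. The lower bound follows from $\mu_f(f(\supp\mu))\geq\mu(\mathbb{R}^n)>0$ and monotonicity of $\dim_P$. The upper bound also goes through:
\begin{itemize}
\item Jankov--von Neumann applied to the analytic set $\{(y,x): x\in G,\ f(x)=y\}$ gives a universally measurable section $g$.
\item $\tilde\mu(B):=\bar\nu(g^{-1}(B))$ is a finite Borel measure on $\mathbb{R}^n$, and its completion gives $G$ full mass.
\item So a compact $K\subset G$ with $\tilde\mu(K)>0$ exists, and $\mu=\tilde\mu|_K$ is a genuine element of $\mathcal{M}_c^+(E)$.
\end{itemize}

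One claim in step 5 is inaccurate. In general $\mu_f(A)=\bar\nu(A\cap g^{-1}(K))$, and $g^{-1}(K)\subseteq f(K)$ can be a proper subset: if $f$ is not injective on $G$, the section may choose a preimage outside $K$. So $\mu_f\neq\nu|_{f(K)}$ in general. This does not matter, because you only use $\mu_f\leq\nu$ and $\mu_f(\mathbb{R}^m)=\tilde\mu(K)>0$, and both hold.

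It is worth comparing with the more common argument, which is essentially your fallback. There one uses Choquet capacitability of the analytic set $E$ for the capacity $A\mapsto\nu^*(f(A))$ to find a compact $L\subset E$ with $\nu(f(L))>0$. One then lifts $\nu|_{f(L)}$ to $L$, either by a Borel selector of the compact-valued map $y\mapsto f^{-1}(y)\cap L$ or by Hahn--Banach plus the Riesz representation theorem. Your fallback's step ``choose a compact $L\subset E$ with $f(L)$ carrying most of a good measure'' is exactly where capacitability is needed, and you left it unjustified.

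Your main route folds this analytic-to-compact passage into the uniformization theorem. The price is working with universally measurable maps and completed measures, whereas the compact route stays entirely Borel. In both routes the only fractal-geometric input is \eqref{eq:dimP_set_measures} applied to the analytic set $f(E)\subset\mathbb{R}^m$, which rests on the Joyce--Preiss theorem.
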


\begin{theorem}[\cite{xiao1997packing}, Theorem 4.1]\label{thm:xiao_sets}
    Let $X:\mathbb{R}^n\to\mathbb{R}^m$ be an index-$\alpha$ fractional
    Brownian motion. Then, for every analytic set $E\subset\mathbb{R}^n$,
    $$\dim_P X(E)=\frac{\dim_P^{\alpha m}E}{\alpha}\qquad\text{a.s.}$$
\end{theorem}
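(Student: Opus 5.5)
The natural strategy is to split the identity into two inequalities. The lower bound comes from a single well-chosen measure together with the measure version, Theorem~\ref{ProjPacking}. The upper bound comes from combining Lemma~\ref{lem:xiao_sets} with the \emph{deterministic} Hölder estimate of Lemma~\ref{Th1} at $\theta=1$. Throughout, measures in $\mathcal{M}_c^+(E)$ may be normalized to probability measures, since dimensions of measures and profiles are unchanged by scaling.

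\emph{Lower bound.} Fix $\eta>0$ and choose $\mu\in\mathcal{M}_c^+(E)$ with $\dim_P^{\alpha m}\mu>\dim_P^{\alpha m}E-\eta$. Since $X$ is almost surely continuous, $\mu_X$ is supported on $X(\supp\mu)\subset X(E)$. Hence Lemma~\ref{lem:xiao_sets} and Theorem~\ref{ProjPacking} give, almost surely,
$$\dim_P X(E)\geq \dim_P\mu_X=\frac{\dim_P^{\alpha m}\mu}{\alpha}>\frac{\dim_P^{\alpha m}E-\eta}{\alpha}.$$
Letting $\eta\to0$ along a countable sequence and intersecting the corresponding almost sure events yields $\dim_P X(E)\geq \alpha^{-1}\dim_P^{\alpha m}E$ almost surely.

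\emph{Upper bound.} Here the difficulty is that $\mathcal{M}_c^+(E)$ is uncountable, so we cannot intersect almost sure events indexed by measures. We avoid this by using a single almost sure event on which a deterministic bound holds for every measure at once. Let $\Omega_0$ be the event on which, for every rational $\varepsilon\in(0,\alpha)$ and every $R\in\mathbb{N}$, the restriction of $X$ to $B(0,R)$ is $(\alpha-\varepsilon)$-Hölder; this event has full probability by the facts recalled in Section~\ref{sec:prelim}.

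Fix $\omega\in\Omega_0$ and $\mu\in\mathcal{M}_c^+(E)$. Then $X$ is $(\alpha-\varepsilon)$-Hölder on $\supp\mu$, and Lemma~\ref{Th1} with $\theta=1$ applies. By Remark~\ref{rem:theta_one}, \eqref{Corollary3}, and the monotonicity in the upper index from Remark~\ref{thm:lipschitz_profile}, it gives
$$\dim_P\mu_X=\dim_{P,1}\mu_X\leq \frac{\dim_{P}^{m(\alpha-\varepsilon)}\mu}{\alpha-\varepsilon}\leq \frac{\dim_{P}^{m\alpha}\mu}{\alpha-\varepsilon}\leq\frac{\dim_P^{m\alpha}E}{\alpha-\varepsilon}.$$
Taking the supremum over $\mu$ and applying Lemma~\ref{lem:xiao_sets} (with $X$ continuous) gives $\dim_P X(E)\leq \dim_P^{m\alpha}E/(\alpha-\varepsilon)$ for all rational $\varepsilon$. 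Letting $\varepsilon\to0$ completes the upper bound on $\Omega_0$.

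The main obstacle is exactly this uniformity over uncountably many measures, and the plan handles it by making the Hölder event independent of $\mu$. Two technical points remain to be checked. First, Lemma~\ref{Th1} is stated for probability measures, so we normalize. Second, the identity $\dim_{P,1}\mu_X=\dim_P\mu_X$ in $\mathbb{R}^m$ must be justified via Remark~\ref{rem:theta_one} and \eqref{Corollary3} applied in dimension $m$.
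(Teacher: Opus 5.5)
The paper does not prove this statement; it is quoted from Xiao as a black box, so there is no internal argument to compare against. Your derivation is correct.

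\textbf{Lower bound.} It needs only countably many near-optimal measures $\mu_j$. Theorem~\ref{ProjPacking} can therefore be applied to each $\mu_j$ and the resulting almost sure events intersected.

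\textbf{Upper bound.} You fix a single full-probability event on which $X$ is locally $(\alpha-\varepsilon)$-H\"{o}lder for every rational $\varepsilon$. On that event, Lemma~\ref{Th1} at $\theta=1$ is a deterministic statement valid for every $\mu\in\mathcal{M}_c^+(E)$ simultaneously. The loss in the exponent is absorbed by the monotonicity in the upper index from Remark~\ref{thm:lipschitz_profile}.

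\textbf{Relation to the paper.} This is the same two-part scheme the paper itself uses in the proof of Corollary~\ref{cor:final}. In one direction, a countable choice of measures plus a probabilistic lower bound for each; in the other, Lemma~\ref{Th1} at $\theta=1$ applied to all measures at once for a fixed $\omega$. In your version, Theorem~\ref{ProjPacking} plays the role of Lemma~\ref{Th2}.

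\textbf{What your route buys.} Theorem~\ref{thm:xiao_sets} becomes a consequence of the measure version, Lemma~\ref{lem:xiao_sets}, and the paper's own Lemma~\ref{Th1}, so the separate citation is not needed. It also makes explicit that $\dim_P X(E)\leq\alpha^{-1}\dim_P^{\alpha m}E$ holds for every sample path with the local H\"{o}lder property. Only the reverse inequality is genuinely probabilistic.

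\textbf{Minor simplification.} In the lower bound you do not need Lemma~\ref{lem:xiao_sets}. Since $X(\supp\mu)$ is compact and $\mu_X(X(\supp\mu))>0$, you get $\dim_P\mu_X\leq\dim_P X(\supp\mu)\leq\dim_P X(E)$ directly.
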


In analogy with the threshold parameter of a measure, we introduce its
counterpart for sets.

\begin{definition}\label{def:threshold_set}
    For an analytic set $E\subset\mathbb{R}^n$, define
    $$\dim_{P,\theta}E:=\sup\{\dim_{P,\theta}\mu\,:\,\mu\in\mathcal{M}_c^+(E)\}$$ and
    $$D(E):=\lim_{\theta\to 0} \dim_{P,\theta}E.$$
\end{definition}

Note that for
each $\mu$, the map $\theta\mapsto\dim_{P,\theta}\mu$ is nondecreasing,
hence so is the supremum, and then the limit as $\theta\to 0$ exists and is its infimum
over $\theta\in(0,1]$.

We are now ready to state the analogue of Theorem~\ref{thm:main} for
analytic sets.

\begin{theorem}\label{thm:main_sets}
    Let $E\subset\mathbb{R}^n$ be an analytic set and $s>0$. Then
    $$\dim_P^s E=s \quad\Longleftrightarrow\quad
    s\leq D(E).$$
\end{theorem}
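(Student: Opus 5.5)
The plan is to prove the two implications separately. The reverse one follows from part (1) of Theorem~\ref{Lemma:cotainferior} applied measure by measure. The forward one rests on a quantitative lower bound for $\dim_{P,\theta}\mu$ in terms of $\dim_P^s\mu$ that is \emph{uniform in $\mu$}; I would obtain it by passing through a fractional Brownian motion and Lemma~\ref{lem:threshold_lower_bound}. Throughout, measures in $\mathcal{M}_c^+(E)$ may be normalised to probability measures, since none of the dimensions involved change under scaling. I treat mainly $0<s\le n$. For $s>n$ both sides fail: $D(E)\le n<s$, and $\dim_P^s\mu\le\dim_P\mu\le n<s$ by the Falconer--Howroyd profile theory.

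($\Leftarrow$) Assume $s\le D(E)$, and fix $\theta\in(0,1]$ and $\eta>0$. Since $\theta\mapsto\dim_{P,\theta}E$ is nondecreasing, $\dim_{P,\theta}E\ge D(E)\ge s$, so there is $\mu\in\mathcal{M}_c^+(E)$ with $\dim_{P,\theta}\mu>s-\eta$. Part (1) of Theorem~\ref{Lemma:cotainferior} then gives
\[
\dim_P^s E\ \ge\ \dim_P^s\mu\ \ge\ (1-\theta)(s-\eta).
\]
Letting $\theta,\eta\to0$ gives $\dim_P^sE\ge s$. The reverse bound $\dim_P^sE\le s$ holds because $\dim_P^s\mu\le s$ for every $\mu$.

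($\Rightarrow$) The key step is the inequality
\begin{equation*}
\dim_{P,\theta}\mu\ \ge\ s-\frac{s-\dim_P^s\mu}{\theta}\qquad\text{for every }\mu\text{ and every }\theta\in(0,1].
\end{equation*}
For $s=n$ this is exactly Lemma~\ref{lem:threshold_lower_bound}, since $\dim_P^n\mu=\dim_P\mu$ and $\dim_A(\supp\mu)\le n$, and the right-hand side is increasing in $\dim_A$.

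For $s<n$, let $X:\mathbb{R}^n\to\mathbb{R}^n$ be an index-$\alpha$ fractional Brownian motion with $\alpha=s/n\in(0,1)$.
\begin{enumerate}
\item By Theorem~\ref{ProjPacking}, $\dim_P\mu_X=\dim_P^s\mu/\alpha$ almost surely.
\item Applying Lemma~\ref{lem:threshold_lower_bound} to $\mu_X$, with $\dim_A(\supp\mu_X)\le n$ and monotonicity in the Assouad term, gives $\dim_{P,\theta}\mu_X\ge n-(n-\dim_P\mu_X)/\theta$.
\item By Corollary~\ref{cor:FBM} and Remark~\ref{thm:lipschitz_profile}, almost surely $\dim_{P,\theta}\mu_X=\dim_{P,\theta}^s\mu/\alpha\le\dim_{P,\theta}\mu/\alpha$.
\end{enumerate}
Choose one $\omega$ in the intersection of these almost-sure events and multiply through by $\alpha$. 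This gives $\dim_{P,\theta}\mu\ge s-(s-\dim_P^s\mu)/\theta$, as claimed.

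Taking the supremum over $\mu\in\mathcal{M}_c^+(E)$, the right-hand side is increasing in $\dim_P^s\mu$, so $\dim_{P,\theta}E\ge s-(s-\dim_P^sE)/\theta$. If $\dim_P^sE=s$, this gives $\dim_{P,\theta}E\ge s$ for every $\theta$, and hence $D(E)\ge s$.

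The main obstacle is precisely this uniform-in-$\mu$ lower bound. A purely measure-wise argument would need near-optimal measures for $\dim_P^sE$ to have $\dim_{P,\theta}$ controlled uniformly, and Theorem~\ref{Lemma:cotainferior} supplies only the opposite-direction estimate. The FBM detour resolves this, because the Assouad dimension of the image support is trivially at most $n$, independently of $\mu$. The care needed is to check that Corollary~\ref{cor:FBM} and Theorem~\ref{ProjPacking} apply simultaneously to the same $\omega$, and to treat the case $s=n$ separately, since $\alpha=1$ is excluded for fractional Brownian motion.
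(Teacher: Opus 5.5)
Your proof is correct. The ($\Leftarrow$) half is the same as the paper's, but the ($\Rightarrow$) half takes a genuinely different route.

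The paper writes $s=\alpha m$ and works at the level of sets. Theorem~\ref{thm:xiao_sets} turns $\dim_P^sE=s$ into $\dim_PX(E)=m$ almost surely. Then, for a \emph{fixed} $\omega$, Lemma~\ref{lem:xiao_sets} produces a measure $\mu\in\mathcal{M}_c^+(E)$ with $\dim_P\mu_{X_\omega}>m-\epsilon$. Lemma~\ref{lem:threshold_lower_bound} and Lemma~\ref{Th1} then carry this back to $\dim_{P,\theta}^{m\alpha}\mu$.

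You reverse the order of quantifiers. For each fixed $\mu$ you choose a good $\omega$ and extract the deterministic, $\mu$-uniform inequality $\dim_{P,\theta}\mu\ge s-(s-\dim_P^s\mu)/\theta$, so the supremum over $\mu$ becomes trivial. This buys several things:
\begin{itemize}
\item It uses only measure-level inputs: Theorem~\ref{ProjPacking}, and from Corollary~\ref{cor:FBM} only the deterministic upper bound that comes from Lemma~\ref{Th1}. Xiao's set-level results are not needed at all.
\item It is quantitative: you get $\dim_{P,\theta}E\ge s-(s-\dim_P^sE)/\theta$. Before your last monotonicity step you even have $\dim_{P,\theta}^s\mu\ge s-(s-\dim_P^s\mu)/\theta$, a natural companion to part~(1) of Theorem~\ref{Lemma:cotainferior}.
\item It is essentially the fractional Brownian motion device the paper uses in part~(2) of that theorem, made quantitative.
\end{itemize}
The paper's route, for its part, stays in the image-set picture that it reuses in Corollary~\ref{cor:final}.

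Your separate treatment of $s=n$ can be avoided. Take $X:\mathbb{R}^n\to\mathbb{R}^m$ with any integer $m>s$ and $\alpha=s/m$, and use the bound $\dim_A(\supp\mu_X)\le m$. The same computation gives the same inequality.

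One slip: in the case $s=n$ you say the right-hand side of Lemma~\ref{lem:threshold_lower_bound} is increasing in $\dim_A$. In fact the bound $A-(A-\dim_P\mu)/\theta$ is \emph{nonincreasing} in $A=\dim_A(\supp\mu)$, because the coefficient of $A$ is $1-1/\theta\le 0$. That is exactly what lets you replace $\dim_A$ by the larger number $n$. Your conclusion is correct; only the stated reason is reversed.
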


\begin{proof}
Fix $\alpha\in (0,1)$, $m\in\mathbb{N}$ such that $s=\alpha m$ and let $X:\mathbb{R}^n\to\mathbb{R}^m$ be an
    index-$\alpha$ fractional Brownian motion.

Suppose first that $\dim_P^s E=s$, this is  $\dim_P X(E)=m$ almost surely. Fix $\omega$ in the
almost-sure event where $\dim_P X_\omega(E)=m$ and where $X_\omega$ is
$(\alpha-\epsilon')$-H\"{o}lder continuous on compact sets for every
$\epsilon'>0$. Fix $\theta\in(0,1]$ and $\epsilon>0$. By
Lemma~\ref{lem:xiao_sets}, there exists $\mu\in\mathcal{M}_c^+(E)$,
which we may normalize to be a probability measure, such that
$\dim_P\mu_{X_\omega}>m-\epsilon$. Since
$$m-\epsilon<\dim_P\mu_{X_\omega}\leq
\dim_A(\operatorname{supp}\mu_{X_\omega})\leq m,$$
Lemma~\ref{lem:threshold_lower_bound} yields
$$\dim_{P,\theta}\mu_{X_\omega}\geq
\dim_A(\operatorname{supp}\mu_{X_\omega})-
\frac{\dim_A(\operatorname{supp}\mu_{X_\omega})-\dim_P\mu_{X_\omega}}{\theta}
\geq m-\epsilon-\frac{\epsilon}{\theta}.$$
On the other hand, by Lemma~\ref{Th1} applied to $X_\omega$ with
H\"{o}lder exponent $\alpha-\epsilon'$, together with Remark ~\ref{thm:lipschitz_profile} and letting $\epsilon'\to 0$,
$$\frac{\dim_{P,\theta}^{m\alpha}\mu}{\alpha}\geq \dim_{P,\theta}\mu_{X_\omega}
\geq m-\epsilon-\frac{\epsilon}{\theta}.$$
Letting $\epsilon\to 0$ with $\theta$ fixed, and using the trivial bound
$\dim_{P,\theta}^{m\alpha}\mu\leq m\alpha$, we obtain
$$\sup_{\mu\in\mathcal{M}_c^+(E)}\dim_{P,\theta}^{m\alpha}\mu= m\alpha
\qquad\text{for every }\theta\in(0,1].$$
Moreover, since
$\dim_{P,\theta}^{m\alpha}\mu\leq\dim_{P,\theta}\mu$ for every $\mu$, letting $\theta\to 0$ gives $$s=m\alpha\leq D(E).$$

Conversely, suppose $s\leq D(E)$. Fix $\theta\in(0,1]$ and
$\delta>0$. Since $\theta'\mapsto\sup_\mu\dim_{P,\theta'}\mu$ is
nondecreasing and its limit as $\theta'\to 0$ is $D(E)$, we have
$\sup_\mu\dim_{P,\theta}\mu\geq D(E)\geq s$, so there exists
$\mu=\mu_{\theta,\delta}\in\mathcal{M}_c^+(E)$, normalized to be a
probability measure, with
$$\dim_{P,\theta}\mu\geq s-\delta.$$
By part $(1)$ of Theorem~\ref{Lemma:cotainferior} applied to $\mu$,
$$\dim_P^{s}\mu\geq(1-\theta)\min\{s,\dim_{P,\theta}\mu\}
\geq(1-\theta)(s-\delta).$$
Taking the supremum over $\mu\in\mathcal{M}_c^+(E)$ and recalling the
definition of the profile of a set,
$$\dim_P^{s}E\geq(1-\theta)(s-\delta).$$
Since $\theta\in(0,1]$ and $\delta>0$ were arbitrary, letting
$\theta\to 0$ and $\delta\to 0$ yields $\dim_P^{s}E\geq s$,
and the reverse inequality being trivial, completing the proof.
\end{proof}

\begin{corollary}\label{cor:final}
    Let $E$ be an analytic set, $m\in\mathbb{N}$ and $0<\alpha\leq 1$. If
    $\{f_\omega:E\to\mathbb{R}^m,\,\omega\in\Omega\}$
    is a family of $\alpha$-Hölder continuous, $\sigma(\mathcal{F}\otimes
    \mathcal{B}(E))$-measurable functions such that
    for every $\theta\in(0,1]$ there exists $c(\theta)>0$ with
    $$\mathbb{P}\bigl(\{\omega\,:\,|f_\omega(x)-f_\omega(y)|\leq r\}\bigr)\leq
    c(\theta)\,\phi_{r^{1/\alpha},\theta}^{m\alpha,m\alpha}(x-y)$$
    for all $x,y\in E$ and $r>0$, then
    
    $$\dim_P f_\omega(E)=m \text{ almost surely}\quad\Longleftrightarrow\quad
    m\alpha\leq D(E).$$
\end{corollary}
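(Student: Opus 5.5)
The plan is to reduce to the measure case, Theorem~\ref{thm:main}, using Lemma~\ref{lem:xiao_sets} for the passage between sets and measures, and to treat the two implications separately, in the same way as Lemmas~\ref{lem:main_direction1} and~\ref{lem:main_direction2}. Throughout, any $\mu\in\mathcal{M}_c^+(E)$ may be normalized to a probability measure, since none of the dimensions involved change under scaling. Lemma~\ref{lem:xiao_sets} is stated for continuous $f:\mathbb{R}^n\to\mathbb{R}^m$, but its proof only uses continuity on $E$ (or on compact subsets of $E$). I will use it for each $f_\omega$ restricted to $E$. If needed, I would first extend $f_\omega$ from the compact sets $\operatorname{supp}\mu$, which suffice for all measures in $\mathcal{M}_c^+(E)$.

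\emph{Forward implication.} Fix $\omega$ in the almost sure event where $\dim_P f_\omega(E)=m$. Given $\theta\in(0,1]$ and $\epsilon>0$, Lemma~\ref{lem:xiao_sets} gives a probability measure $\mu\in\mathcal{M}_c^+(E)$ with $\dim_P\mu_{f_\omega}>m-\epsilon$. As in the proof of Theorem~\ref{thm:main_sets}, Lemma~\ref{lem:threshold_lower_bound} applied to $\mu_{f_\omega}$ gives
\[
\dim_{P,\theta}\mu_{f_\omega}\geq m-\epsilon-\epsilon/\theta .
\]
Lemma~\ref{Th1}, applied to $f_\omega$ restricted to $\operatorname{supp}\mu$, together with monotonicity in the upper index from Remark~\ref{thm:lipschitz_profile}, then gives
\[
\alpha\bigl(m-\epsilon-\epsilon/\theta\bigr)\leq\dim_{P,\theta}^{m\alpha}\mu\leq\dim_{P,\theta}\mu\leq\dim_{P,\theta}E .
\]
Letting $\epsilon\to0$ yields $m\alpha\leq\dim_{P,\theta}E$ for every $\theta$, and letting $\theta\to0$ yields $m\alpha\leq D(E)$. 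This direction is deterministic, so it needs only one good $\omega$.

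\emph{Reverse implication.} Assume $m\alpha\leq D(E)$. For a fixed probability measure $\mu\in\mathcal{M}_c^+(E)$, the restricted family $\{f_\omega|_{\operatorname{supp}\mu}\}$ satisfies the hypotheses of Lemma~\ref{Th2} with $\gamma=1/\alpha$ for each $\theta$. Hence, almost surely,
\[
\dim_P f_\omega(E)\geq\dim_P\mu_{f_\omega}\geq\dim_{P,\theta}\mu_{f_\omega}\geq\dim_{P,\theta}^{m\alpha}\mu/\alpha .
\]
The issue is that the supremum over $\mu$ is uncountable, so I will pass to countably many measures before using the almost sure statements. For $j\in\mathbb{N}$, let $\theta_j=1/j$. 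Since $\dim_{P,\theta_j}E\geq D(E)\geq m\alpha$, choose $\mu_j\in\mathcal{M}_c^+(E)$ with $\dim_{P,\theta_j}\mu_j\geq m\alpha-1/j$.

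Part (1) of Theorem~\ref{Lemma:cotainferior} only bounds $\dim_P^{m\alpha}\mu_j$, not $\dim_{P,\theta}^{m\alpha}\mu_j$, so instead I will pass through the $\theta=1$ profile. By Lemma~\ref{Th2} at $\theta=1$ and Remark~\ref{rem:theta_one}, almost surely
\[
\dim_P\mu_{j,f_\omega}\geq\dim_P^{m\alpha}\mu_j/\alpha ,
\]
where $\mu_{j,f_\omega}$ is the image of $\mu_j$. This uses the hypothesis with $c(1)$. By part (1) of Theorem~\ref{Lemma:cotainferior},
\[
\dim_P^{m\alpha}\mu_j\geq(1-\theta_j)\bigl(m\alpha-1/j\bigr).
\]
Intersecting the countably many almost sure events over $j$, we get almost surely $\dim_P f_\omega(E)\geq(1-1/j)(m-1/(j\alpha))$ for all $j$, hence $\dim_P f_\omega(E)\geq m$. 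The bound $\dim_P f_\omega(E)\leq m$ is trivial. This argument closely parallels Theorem~\ref{thm:main_sets}, namely $\dim_P^{m\alpha}E=m\alpha\iff m\alpha\leq D(E)$, which can be quoted as a shortcut for the selection of the $\mu_j$.

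The main obstacle is the technical passage between sets and measures: Lemma~\ref{lem:xiao_sets} requires $f_\omega$ to be defined and continuous on $\mathbb{R}^n$, whereas here $f_\omega$ is only defined on $E$, and the measurability hypothesis is given on $E$ rather than on each $\operatorname{supp}\mu$. I would handle this either by noting that Xiao's proof is local to compact subsets of $E$, or by a McShane/Kirszbraun-type Hölder extension of each $f_\omega$ from $\operatorname{supp}\mu$. The only other subtlety is avoiding an uncountable intersection of null sets, which is exactly why the argument fixes the countable family $\mu_j$ before invoking any almost sure statement.
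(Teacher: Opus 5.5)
Your proof is correct. The reverse implication is the paper's argument, and the forward implication takes a more direct route than the paper's.

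\textbf{Reverse implication.} The paper quotes Theorem~\ref{thm:main_sets} to obtain $\mu_j\in\mathcal{M}_c^+(E)$ with $\dim_P^{m\alpha}\mu_j>m\alpha-1/j$. It then applies Lemma~\ref{Th2} at $\theta=1$ to this countable family. You do exactly the same after inlining the reverse half of Theorem~\ref{thm:main_sets} through part~(1) of Theorem~\ref{Lemma:cotainferior}.

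\textbf{Forward implication.} The paper applies Lemma~\ref{Th1} only at $\theta=1$, which gives $m\le \dim_P^{m\alpha}E/\alpha$, that is, $\dim_P^{m\alpha}E=m\alpha$. It then invokes Theorem~\ref{thm:main_sets}. The forward half of that theorem works as follows:
\begin{itemize}
\item it uses Theorem~\ref{thm:xiao_sets} to turn $\dim_P^{s}E=s$ into $\dim_P X(E)=m$ for an auxiliary fractional Brownian motion;
\item it then runs precisely your $\theta$-level argument, Lemma~\ref{lem:threshold_lower_bound} followed by Lemma~\ref{Th1}, on one sample path;
\item this needs an extra $(\alpha-\epsilon')$-H\"older limit.
\end{itemize}
You run that argument directly on $f_\omega$. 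So you avoid the fractional Brownian motion detour and Xiao's Theorem~4.1, you use the exact exponent $\alpha$, and you need only a single good $\omega$.

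The paper's route buys modularity. It isolates the purely deterministic statement $\dim_P^{s}E=s\iff s\le D(E)$, valid for all real $s>0$, and the corollary then reduces to a sandwich at $\theta=1$.

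\textbf{A small caveat, shared with the paper.} Deducing $\dim_{P,\theta}^{m\alpha}\mu\le\dim_{P,\theta}\mu$ from Remark~\ref{thm:lipschitz_profile} requires $m\alpha\le n$. If $m\alpha>n$, the forward hypothesis cannot hold anyway, since for an $\alpha$-H\"older map $\dim_P f_\omega(E)\le \dim_P E/\alpha\le n/\alpha<m$.

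\textbf{The obstacle you flagged is not a real one.} Extending $f_\omega$ off $E$ works by a componentwise McShane extension, because $|x-y|^\alpha$ is a metric. Joint measurability also restricts to the Borel set $\operatorname{supp}\mu\subset E$. The paper uses both facts without comment.
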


\begin{proof}
Since $f_\omega(E)\subseteq\mathbb{R}^m$, we always have
$\dim_P f_\omega(E)\leq m$; moreover $f_\omega(E)$ is analytic, being a
continuous image of an analytic set.

Assume $m\alpha\leq D(E)$. By Theorem~\ref{thm:main_sets},
$\dim_P^{m\alpha}E=m\alpha$, so there exist
$\mu_j\in\mathcal{M}_c^+(E)$ with
$\dim_P^{m\alpha}\mu_j>m\alpha-1/j$. Applying Lemma~\ref{Th2} with
$\theta=1$ and $\gamma=1/\alpha$ to each $\mu_j$, together with
Remark~\ref{rem:theta_one},
$$\dim_P\mu_{j,f_\omega}=\dim_{P,1}\mu_{j,f_\omega}
\geq\tfrac{1}{\alpha}\,\dim_{P,1}^{m\alpha}\mu_j
=\tfrac{1}{\alpha}\,\dim_P^{m\alpha}\mu_j>m-\tfrac{1}{j\alpha},$$
almost surely.

Since $\mu_{j,f_\omega}$ is supported on $f_\omega(E)$, we have
$\dim_P f_\omega(E)\geq\dim_P\mu_{j,f_\omega}$, and intersecting the
countably many almost sure events yields $$\dim_P f_\omega(E)= m,$$ almost surely.

Conversely, assume $\dim_P f_\omega(E)=m$ almost surely and fix any
$\omega$ in the corresponding event. By \eqref{eq:dimP_set_measures}
and Lemma~\ref{Th1} applied to the $\alpha$-H\"{o}lder map $f_\omega$
with $\theta=1$, together with Remark~\ref{rem:theta_one},
$$m=\dim_P f_\omega(E)
=\sup_{\mu\in\mathcal{M}_c^+(E)}\dim_P\mu_{f_\omega}
\leq\tfrac{1}{\alpha}\sup_{\mu\in\mathcal{M}_c^+(E)}\dim_P^{m\alpha}\mu
=\tfrac{1}{\alpha}\,\dim_P^{m\alpha}E,$$
so $m\alpha\leq\dim_P^{m\alpha}E\leq m\alpha$; then Theorem~\ref{thm:main_sets}
gives $m\alpha\leq D(E)$.
\end{proof}

\section*{Funding}

This work was supported by FIS-2023-02725, singular structures in the geometry of measures: decompositions, rigidity and rectifiability, CUP E53C25001800001. The author was additionally partially supported by grants PICT 2022-4875 (ANPCyT), PIP 202287/22 (CONICET), and PROICO 3-0720 ``An\'alisis Real y Funcional. Ec. Diferenciales''.

\section*{Declaration of generative AI and AI-assisted technologies in the writing process}

During the preparation of this work the author used Claude (Anthropic) in
order to improve the clarity, grammar, and English style of the
manuscript, and to help with LaTeX formatting. After
using this tool, the author reviewed and edited the content as needed and
takes full responsibility for the content of the publication.

\end{document}